\newtheorem{theorem}{Theorem}[section]
\newtheorem{proposition}{Proposition}[section]
\newtheorem{lemma}{Lemma}[section]
\newtheorem{corollary}{Corollary}[section]
\newtheorem{remark}{\textbf{Remark}}[section]
\def\NN{\mathbb{N}}
\def\o{\omega}
\def\p{\partial}
\def\k{\kappa}
\def\l{\lambda}
\def\L{\Lambda}
\def\s{\sigma}
\def\S{\Sigma}
\def\RR{\mathbb R}\def\R{\mathbb R}
\def\KK{\mathcal K}
\def\HH{\mathbb H}
\def\H{\mathbb H}
\def\SS{{\mathbb S}}
\def\<{\langle}
\def\>{\rangle}
\def\H{{\mathbb H}}
\numberwithin{equation}{section}
\def \ds{\displaystyle}
\def \vs{\vspace*{0.1cm}}
\begin{document}

\title[Isoperimetric type problems in hyperbolic space]{Isoperimetric type problems and 
Alexandrov-Fenchel type inequalities in  the hyperbolic space}
\author{Guofang Wang}
\address{ Albert-Ludwigs-Universit\"at Freiburg,
Mathematisches Institut,
Eckerstr. 1,
79104 Freiburg, Germany}
\email{guofang.wang@math.uni-freiburg.de}
\thanks{GW is partly supported by SFB/TR71 ``Geometric partial differential equations'' of DFG}
\author{Chao Xia}\address{Max-Planck-Institut f\"ur Mathematik in den Naturwissenschaft, Inselstr. 22, D-04103, Leipzig, Germany}
\thanks{Part of this work was done while CX was visiting the mathematical institute of Albert-Ludwigs-Universit\"at Freiburg. He would like to thank the institute for its hospitality.
}
\email{chao.xia@mis.mpg.de}

\date{}


\begin{abstract}{ 
In this paper, we solve  various isoperimetric  problems for the quermassintegrals and the curvature integrals in the 
hyperbolic space $\H^n$, by using quermassintegral preserving curvature flows. As a byproduct, we obtain hyperbolic Alexandrov-Fenchel inequalities.
}

\end{abstract}
\

\medskip
\subjclass[2010]{52A40, 53C65, 53C44}
\keywords{quermassintegral, curvature integral, isoperimetric problem, Alexandrov-Fenchel inequality}

\maketitle

\section{Introduction}

 Isoperimetric type problems play an important role in mathematics.
The classical isoperimetric theorem in the Euclidean space says that among all  bounded domains in $\RR^n$ with given volume, 
the minimum of the area of the boundary is achieved precisely by the round balls. This can be formulated as an optimal inequality
\begin{eqnarray}\label{isop}
\hbox{Area}(\p K)\geq n^\frac{n-1}{n}\o_{n-1}^\frac1n\hbox{Vol}(K)^\frac{n-1}{n}, 
\end{eqnarray}
for any bounded domain $K\subset\RR^n$,
and equality holds if and only if $K$ is a geodesic ball. Here and throughout this paper,  $\o_k$ denotes the Lebesgue measure of $k$-dimensional unit sphere $\SS^k$, and by a bounded domain we mean a compact set with non-empty interior.
When $n=2$, inequality \eqref{isop} is 
\begin{eqnarray}\label{isop0}
 L^2\ge 4\pi A,\end{eqnarray}
where $L$ is the length of a  closed curve $\gamma$ in $\R^2$ and $A$ is the area of the enclosed domain by $\gamma$.
\eqref{isop} and \eqref{isop0}
are  the classical isoperimetric inequalities. Their general forms are  the Alexandrov-Fenchel quermassintegral inequalities.
A special, but interesting class of the Alexandrov-Fenchel quermassintegral establishes
the relationship between the quermassintegrals or the curvature integrals:
\begin{eqnarray}\label{AF}
\int_{\p K} H_k d\mu\geq \o_{n-1}^\frac{k-l}{n-1-l}\left(\int_{\p K} H_l d\mu\right)^\frac{n-1-k}{n-1-l}, \quad 0\leq  l<k\leq n-1,
\end{eqnarray}
for any  convex bounded domain $K\subset \RR^n$  with $C^2$ boundary, where $H_k$ are the \textit{(normalized) $k$-th mean curvature} of $\p K$ as an embedding in $\RR^n$.
These inequalities have been intensively studied by many mathematicians and have many applications in differential geometry and integral geometry. 
See the excellent books of Burago-Zalgaller  \cite{BuragoZalgaller}, Santalo \cite{San} and Schneider \cite{Schneider}.
Recently, the Alexandrov-Fenchel quermassintegral inequalities in $\R^n$ have been extended to certain classes of  non-convex domains. See for example \cite{ChangWang, GL, Huisken}.

All these above inequalities solve the problem if one geometric quantity attains its minimum or maximum at geodesic balls
 among  a class of (smooth) bounded  domains in $\RR^n$ with another  given geometric quantity. We call such problems {\it  isoperimetric type problems}.

It is a very natural question to ask if such  isoperimetric type problems also hold in the hyperbolic space $\H^n$. We remark that in this paper $\H^n$ denotes the hyperbolic space with  the sectional curvature $-1$. One of main motivations to study this problem comes naturally from 
integral geometry in $\H^n$. Another main motivation comes from the recent study of ADM mass, Gauss-Bonnet-Chern mass and quasi-local mass in asymptotically hyperbolic manifolds.
The isoperimetric problem between volume and area in $\H^n$ was already solved by Schmidt \cite{Schmidt} 70 years ago.
 Due to its complication, a simple explicit  inequality like  \eqref{isop} is in general not available.  When  $n=2$, there is an  explicit form, namely 
 the hyperbolic isoperimetric inequality in this case is
\begin{eqnarray}
\label{isoH} L^2\ge 4\pi A+A^2,\end{eqnarray}
where $L$ is the length of a closed curve $\gamma$ in $\H^2$ and $A$ is the area of the enclosed domain by $\gamma$. Moreover, equality holds if and only if $\gamma$ is a circle.
Comparing to \eqref{isop0}, inequality \eqref{isoH} has an extra term. This is a well-known phenomenon, which indicates that the  isoperimetric type problems
in $\H^n$ 
are more complicated than the ones in $\R^n$.

Till now, the Alexandrov-Fenchel type inequalities or the isoperimetric type problems in the hyperbolic space are quite open except some special cases. See for example 
\cite{BM1,  GS, GHS}. In \cite{BM1} and \cite{GS}, some interesting inequalities between curvature integrals and quermassintegrals have been obtained. However, the results
obtained there are far away from being optimal. Here we say that a geometric inequality for bounded domains is optimal, if equality holds if and only if the domain
is a geodesic ball. In other words, only geodesic balls solve the corresponding isoperimetric problem.
More  recently, several interesting  works have appeared in this research field, see \cite{BHW, dLG,  GWW, GWW2, LWX}. 
In \cite{GWW, GWW2, LWX}, the authors solve some special cases of the isoperimetric type problems by establishing 
the following inequalities as the Alexandrov-Fenchel inequalities \eqref{AF} for the curvature integrals. For $2\leq 2k\le n-1$,
\begin{equation}\label{AFk}
\begin{array}{rcl}
\ds \int_{\p K} H_{2k}d\mu\ge \ds\vs \omega_{n-1}\left\{ \left( \frac{|\p K|}{\omega_{n-1}} \right)^\frac 1k +
\left( \frac{|\p K|}{\omega_{n-1}} \right)^{\frac 1k\frac {n-1-2k}{n-1}} \right\}^k,
\end{array}
\end{equation}
for any horospherical convex domain $K\subset \H^n$. Here $|\p K|$ is the area of $\p K$. This is optimal, in the sense that equality holds if and only if $K$ is
a geodesic ball in $\H^n$. When $k=1$, inequality \eqref{AFk} was proved in \cite{LWX} under a weaker condition that $\p K$ is star-shaped and 2-convex.

In order to state our results we give more precise definitions about quermassintegrals and curvature integrals.

Let us first recall two different kinds of convexity in $\HH^n$.
A domain $K\subset\HH^n$ is said to be \textit{(geodesically) convex} if for every point $p\in\p K$, $K$ is contained in the enclosed ball of some totally geodesic sphere through $p$. 
A domain $K\subset \HH^n$ is said to be \textit{horospherical convex}, or \textit{h-convex}, or have \textit{h-convex boundary}, if for every point $p\in\p K$, $K$ is contained in the enclosed ball of some horosphere $S_h(p)$ through $p$. Moreover, it is said to be \textit{strictly h-convex} if  $\p K\cap S_h(p)=p$. Recall that a horosphere in $\HH^n$ is a hypersurface obtained
as the limit of a geodesic sphere of $\HH^n$ when its center goes to the infinity along
a fixed geodesic ray. The (strict) h-convexity of $K\subset\HH^n$ is equivalent to that all the principal curvatures of its boundary $\p K$ are (strictly) bounded below by $1$. The geodesic balls in $\HH^n$ are all strictly h-convex. An h-convex domain must be convex, but the converse is not true. In some sense, the horospherical convexity is more natural geometric concept than the convexity in $\H^n$, see for example \cite{GNS}. The horospherical convexity plays a crucial  role in  the proof of \eqref{AFk} in \cite{GWW, GWW2} for $k\ge 2$. It is also crucial
 for this paper.


For a (geodesically) convex domain $K\subset \HH^n$, the \textit{quermassintegrals} are defined by
\begin{eqnarray*}
&&W_k(K):=\frac{(n-k)\o_{k-1}\cdots\o_0}{n\o_{n-2}\cdots\o_{n-k-1}}\int_{\mathcal{L}_k}\chi(L_k\cap K)dL_k, \quad k=1,\cdots,n-1;
\end{eqnarray*}
where $\mathcal{L}_k$ is the space of $k$-dimensional totally geodesic subspaces $L_k$ in $\HH^n$ and $dL_k$ is the natural (invariant) measure on $\mathcal{L}_k$. The function $\chi$ is given by $\chi(K)=1$  if $K\neq \emptyset$ and $\chi(\emptyset)=0.$
For simplicity, we also use the convention 
\begin{eqnarray*}
&& W_0(K)=\hbox{Vol}(K), \quad W_n(K)=\frac{\o_{n-1}}{n}.
\end{eqnarray*}
Remark that by definition we know
\[ W_1(K)=\frac1n |\p K|.\]
If the boundary $\p K$ is $C^2$-differentiable, the \textit{curvature integrals} are defined by
\begin{eqnarray*}
V_{n-1-k}(K)=\int_{\p K} H_k d\mu, \quad k=0,\cdots, n-1,
\end{eqnarray*}
where $H_k$ are the (normalized) $k$-th mean curvature of $\p K$ as an embedding in $\H^n$ and $d\mu$ is the area element on $\p K$ induced from $\HH^n$.

 From the viewpoint of integral geometry, the quermassintegrals seem to be more important and play a central role. 
Nevertheless, the curvature integrals are also very important geometric quantities not only in integral geometry, but also in the theory of submanifolds.
In $\RR^n$, the quermassintegrals coincide the curvature integrals, up to a constant multiple. However, 
the quermassintgrals and the curvature integrals in $\HH^n$ do not coincide. Nevertheless they  are closely related (see e.g. \cite{Sol2}, Proposition 7):
\begin{eqnarray*}
V_{n-1-k}(K)&=& n\left( W_{k+1}(K)+\frac{k}{n-k+1}W_{k-1}(K)\right),\quad k=1,\cdots,n-1,\\
V_{n-1}(K)&=& n W_1(K)=|\p K|.\nonumber
\end{eqnarray*}

 In this paper, we will solve a large class of  the isoperimetric type problems in $\H^n$ 
 involving the quermassintegrals and the curvature integrals for  h-convex bounded domains with smooth boundary.

 The first main result of this paper is the following Alexandrov-Fenchel type inequalities for the quermassintegrals.

\begin{theorem}\label{thm1} Let $\KK$ be the space of  h-convex bounded domains in $\HH^n$ with smooth boundary and $K\in \KK$.  For $0\leq l<k\leq n-1$, we have 
\begin{eqnarray*}
W_k(K)\geq f_k\circ f_l^{-1}(W_l(K)).
\end{eqnarray*}
Equality  holds if and only if $K$ is a geodesic ball. Here $f_k:[0,\infty)\to \RR_+$ is a monotone function defined by $f_k(r)=W_k(B_r)$, the $k$-th quermassintegral for  the geodesic ball of radius $r$, and $f_l^{-1}$ is the inverse function of $f_l$.
In other words, the minimum of $W_k$ among the domains in $\KK$ with given $W_l$ is achieved precisely by  geodesic balls.
\end{theorem}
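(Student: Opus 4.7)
The approach is a quermassintegral-preserving curvature flow argument. Since $f_l$ is strictly increasing on $[0,\infty)$ (the $l$-th quermassintegral of a geodesic ball is a strictly monotone function of its radius), it suffices to prove that among all $K\in\KK$ with $W_l(K)$ equal to a prescribed value $W_l(B_R)$, the minimum of $W_k$ is attained by $B_R$. The plan is to evolve $\p K$ by a normal-speed equation of the form
\[
\p_t X = (\lambda(t) - F)\nu,
\]
where $F$ is a suitably chosen function of the principal curvatures (for instance, a ratio of elementary symmetric polynomials aligned with $k$ and $l$), and $\lambda(t)$ is the global Lagrange-multiplier term enforcing $\tfrac{d}{dt}W_l(K_t)=0$ along the flow.

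The first task is to check that $\lambda(t)$ is well defined and smooth. Using the first variation formulas for quermassintegrals in $\H^n$ together with the identities relating $W_k$ to the curvature integrals recalled in the introduction, $\lambda(t)$ is expressed as a ratio of two strictly positive integrals on the h-convex hypersurface $\p K_t$. With this $\lambda(t)$ fixed, computing $\tfrac{d}{dt}W_k(K_t)$ reduces the sign question to an integrated Newton-MacLaurin-type inequality for the principal curvatures $\kappa_1,\ldots,\kappa_{n-1}$ on the admissible cone $\{\kappa_i\geq 1\}$. H-convexity is precisely what is needed here: on this cone the relevant shifted Newton-MacLaurin inequalities (accounting for the constant sectional curvature $-1$) hold with the correct sign, with equality only at umbilical points.

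Granting monotonicity, the remaining tasks are: (a) show that h-convexity is preserved, via a tensor maximum principle applied to the shifted Weingarten operator $h^i_j-\delta^i_j$; (b) derive $C^0$, $C^1$ and curvature ($C^2$) a priori estimates, where the preservation of $W_l$ together with h-convexity forces uniform inner and outer ball bounds and hence controls the $C^0$ norm; (c) conclude smooth convergence to a geodesic sphere $B_R$. Convergence follows because any subsequential limit must render the monotone quantity $W_k - f_k\circ f_l^{-1}(W_l)$ stationary, and by Newton-MacLaurin rigidity this forces umbilicity. Passing to the limit yields $W_k(K)\geq W_k(B_R)=f_k(f_l^{-1}(W_l(K)))$, while equality at $t=0$ propagates back to umbilicity, giving the rigidity statement.

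The main obstacle will be step (a) together with the uniform curvature bounds in (b). Preservation of h-convexity under a fully nonlinear flow with a nonlocal forcing term $\lambda(t)$ requires a careful structural analysis of $F$ on the admissible cone (typically inverse-concavity together with a compatible sign condition at $\kappa_i=1$). Uniform upper bounds on $\kappa_{\max}$ along such flows are notoriously delicate and usually demand a Tso-type auxiliary function argument adapted to the hyperbolic background geometry and to the nonlocal term $\lambda(t)$.
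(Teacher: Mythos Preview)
Your overall scheme coincides with the paper's: run a normalized flow $\partial_t X=(c(t)-F)\nu$ that fixes $W_l$, show $W_k$ is nonincreasing, and use long-time convergence to a geodesic sphere. The paper takes precisely $F=(H_k/H_l)^{1/(k-l)}$ and $c(t)=\int H_l F/\int H_l$, and the preservation of h-convexity, the a~priori estimates (inner/outer radius via the $W_l$-constraint, Tso-type bound on $F$), and the exponential convergence are handled exactly as you outline.

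Where your proposal goes wrong is the monotonicity step. You claim that $\tfrac{d}{dt}W_k\le 0$ reduces to a ``shifted Newton--MacLaurin inequality on the cone $\{\kappa_i\ge 1\}$'' and that ``h-convexity is precisely what is needed here.'' That is not the mechanism. From the variation formula one gets
\[
\frac{d}{dt}W_k(K_t)=\frac{n-k}{n}\,\frac{1}{\int_{\Sigma_t}H_l}\Big(\int_{\Sigma_t}H_k\int_{\Sigma_t}H_lF-\int_{\Sigma_t}H_l\int_{\Sigma_t}H_kF\Big),
\]
which, for the specific choice $F=(H_k/H_l)^{1/(k-l)}$, is nonpositive by two applications of H\"older's inequality (equivalently, a covariance/Chebyshev argument: $F$ is a monotone function of $H_k/H_l$). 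No pointwise curvature inequality, shifted or otherwise, is used, and h-convexity plays no role beyond ensuring $H_l>0$. The equality case $H_k=cH_l$ then forces umbilicity by a separate rigidity argument (the paper gives both a flow-based proof and an independent Minkowski-formula proof). So you should pin down $F$ explicitly and replace the Newton--MacLaurin heuristic by the H\"older computation; otherwise the sign of $\tfrac{d}{dt}W_k$ is not established. H-convexity enters only where you later said it does: preservation under the flow, comparability of inner and outer radii, and the pinching/curvature estimates needed for convergence.
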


 Moreover, from Theorem \ref{thm1}  we solve the following isoperimetric type problems.
  
\begin{theorem}\label{thm2} Let $\KK$ be the space of  h-convex bounded domains in $\HH^n$ with smooth boundary.  Then the following holds:
\begin{itemize}
\item[(i)] For $0\le l<  k\leq n-1$, $V_{n-1-k}$ 
attains its minimum at a geodesic ball among the domains in $\KK$ with given $W_{l}$;
\item[(ii)] For $0\leq k\leq n-1$, $V_{n-1-k}$ attains its minimum at a geodesic ball among the domains in $\KK$ with given volume $W_0=Vol$;
\item[(iii)] For $1\leq k\leq n-1$, $V_{n-1-k}$ attains its minimum at a geodesic ball among the domains in $\KK$ with given area $|\partial K|=nW_1=V_{n-1}$ of the boundary $\partial K$;
\item[(iv)] For $0\leq l<k \leq {n-1}$ and $k-l=2m$ for some $m\in\NN$, $V_{n-1-k} $ attains its minimum at a geodesic ball among the domains in $\KK$ with given $V_{n-1-l}$.
\end{itemize}
\end{theorem}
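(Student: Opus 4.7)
The plan is to derive Theorem \ref{thm2} from Theorem \ref{thm1} together with the identity recalled in the introduction,
\[V_{n-1-k}(K)=n\left(W_{k+1}(K)+\frac{k}{n-k+1}W_{k-1}(K)\right),\qquad 1\le k\le n-1,\]
and $V_{n-1}=nW_1$, exploiting the strict monotonicity in $r$ of each ball-function $f_j(r):=W_j(B_r)$ (and, as a consequence, of each $r\mapsto V_{n-1-k}(B_r)$, which is $\omega_{n-1}\sinh^{n-1-k}r\cosh^k r$).

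Parts (i), (ii), and the range $k\ge 2$ of (iii) are essentially immediate. Let $B$ be the unique geodesic ball with the same value as $K$ of the prescribed quermassintegral ($W_l$ for (i), $W_0$ for (ii), $W_1$ for (iii)); the hypothesis $l<k$ ensures $l\le k-1<k+1$, so Theorem \ref{thm1} yields $W_{k\pm 1}(K)\ge W_{k\pm 1}(B)$ (with equality in the lower slot precisely when $l=k-1$). Summing with the positive coefficients in the identity produces $V_{n-1-k}(K)\ge V_{n-1-k}(B)$, and the rigidity statement of Theorem \ref{thm1} applied to $W_{k+1}$ forces $K=B$ in the equality case. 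The borderline (ii) with $k=0$ reduces to $V_{n-1}(K)=nW_1(K)\ge nW_1(B)$, which is Theorem \ref{thm1} with $(l,k)=(0,1)$.

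Part (iv) requires a two-step comparison because fixing $V_{n-1-l}(K)$ does not by itself determine any single quermassintegral of $K$. The plan is to introduce an auxiliary ball $B_{r_*}$ defined by $W_{l+1}(B_{r_*})=W_{l+1}(K)$ alongside the target ball $B_\rho$ defined by $V_{n-1-l}(B_\rho)=V_{n-1-l}(K)$. For $l\ge 1$, Theorem \ref{thm1} applied with indices $(l-1,l+1)$ gives $W_{l-1}(K)\le W_{l-1}(B_{r_*})$, and the identity for $V_{n-1-l}$ produces
\[V_{n-1-l}(B_\rho)=V_{n-1-l}(K)\le V_{n-1-l}(B_{r_*}),\]
which forces $\rho\le r_*$ by monotonicity (the case $l=0$ collapses to $\rho=r_*$). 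Since $k-l=2m\ge 2$, both indices $k\pm 1$ are at least $l+1$, so Theorem \ref{thm1} gives $W_{k\pm 1}(K)\ge W_{k\pm 1}(B_{r_*})$; the identity for $V_{n-1-k}$ together with monotonicity of $r\mapsto V_{n-1-k}(B_r)$ then yields $V_{n-1-k}(K)\ge V_{n-1-k}(B_{r_*})\ge V_{n-1-k}(B_\rho)$, completing this part, with the equality case again forced by the rigidity in Theorem \ref{thm1}.

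The case I would expect to be the main obstacle is (iii) with $k=1$, where the identity reduces to $V_{n-2}(K)=nW_2(K)+W_0(K)$: Theorem \ref{thm1} gives $W_2(K)\ge W_2(B)$ but simultaneously $W_0(K)\le W_0(B)$ for the ball $B$ with the same $W_1$, so the two contributions push in opposite directions and a naive combination of the identity with Theorem \ref{thm1} is insufficient. This case amounts to a hyperbolic Minkowski-type inequality between the mean curvature integral and the boundary area, and I would expect it to require either a refined Alexandrov--Fenchel-type inequality involving $W_0$, $W_1$ and $W_2$ simultaneously, or a direct argument via an area-preserving mean curvature flow that decreases $V_{n-2}$ monotonically toward its ball value.
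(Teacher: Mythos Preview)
Your argument is correct, and for parts (i), (ii), and (iii) with $k\ge 2$ it coincides with the paper's: combine the identity $V_{n-1-k}=n\bigl(W_{k+1}+\tfrac{k}{n-k+1}W_{k-1}\bigr)$ with Theorem~\ref{thm1} applied to each quermassintegral. You also correctly isolate (iii) with $k=1$ as the one case where this breaks down; the paper does not prove it by its own methods either, but quotes \cite{GWW2}, which in turn relies on results of Cheng--Zhou \cite{CZ} and Li--Wei--Xiong \cite{LWX}. So your diagnosis is exactly right, and your suggested alternatives (a sharper three-term inequality, or an area-preserving flow) are in the spirit of what those references do.

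For part (iv) your route genuinely differs from the paper's. The paper builds auxiliary monotone functions $g_k(s)=nf_k\circ f_{k-2}^{-1}(s)+\tfrac{n(k-1)}{n-k+2}s$ and $h_k(s)=g_{k+1}\bigl(\tfrac1ns-\tfrac{k-2}{n-k+3}g_{k-1}^{-1}(s)\bigr)$, proves the single-step bound $\int_{\partial K}H_k\,d\mu\ge h_k\bigl(\int_{\partial K}H_{k-2}\,d\mu\bigr)$, and then iterates $m$ times. Your two-ball comparison is more direct: fix $B_{r_*}$ by $W_{l+1}(B_{r_*})=W_{l+1}(K)$, use Theorem~\ref{thm1} on the pair $(l-1,l+1)$ to get $W_{l-1}(K)\le W_{l-1}(B_{r_*})$ and hence $\rho\le r_*$, then apply Theorem~\ref{thm1} to the pairs $(l+1,k\pm1)$ in one shot. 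This bypasses both the induction and the auxiliary functions; the trade-off is that the paper's version produces an explicit functional inequality $\int H_k\ge h_k\circ\cdots\circ h_{l+2}\bigl(\int H_l\bigr)$, whereas yours gives only the ball comparison. Both are valid; yours is shorter.
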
 

Theorem \ref{thm1} and \ref{thm2} give an affirmative   answer 
to the question posed by Gao-Hug-Schneider in \cite{GHS} for $\H^n$ (in the case of  h-convex bounded domains with smooth boundary).

\

Unlike in $\R^n$, most of above results for  quermassintegrals and 
the curvature integrals in $\H^n$ have no explicit (inequality) form. As mentioned above, even the classical 
isoperimetric problem between volume and area in $\H^n$ solved in \cite{Schmidt} has in general no explicit from.
Here we are able to formulate Statement (iii) in Theorem \ref{thm2}
in an optimal inequality.
\begin{theorem}\label{thm3} Let $1\le k\le n-1$.  Any  h-convex bounded domain $K$ in $\H^n$ with smooth boundary
satisfies
\begin{equation}\label{eq_add}
\int_{\p K}H_{k}d\mu\geq \omega_{n-1}\bigg\{\bigg(\frac{|\p K|}{\omega_{n-1}}\bigg)^{\frac{2}{k}}+\bigg(\frac{|\p K|}{\omega_{n-1}}\bigg)^{\frac{2}{k}\frac{(n-k-1)}{n-1}}\bigg\}^{\frac {k}{2}}.
\end{equation} Equality holds if and only if $K$ is a geodesic ball.
\end{theorem}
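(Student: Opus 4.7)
The key observation is that \eqref{eq_add} is precisely the explicit inequality form of Theorem \ref{thm2}(iii), which asserts that $V_{n-1-k}(K)=\int_{\partial K}H_k\,d\mu$ attains its minimum over $\mathcal{K}$ with fixed boundary area $|\partial K|$ at a geodesic ball. Accepting Theorem \ref{thm2}(iii) as a black box, the plan is to compute $V_{n-1-k}(B_r)$ explicitly for the ball $B_r$ satisfying $|\partial B_r|=|\partial K|$ and identify this expression with the right-hand side of \eqref{eq_add}.

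First I would exploit the fact that every principal curvature of $\partial B_r$ equals $\coth r$, so $H_k\equiv\coth^{k} r$ on $\partial B_r$; combined with $|\partial B_r|=\omega_{n-1}\sinh^{n-1}r$ this yields
\begin{equation*}
V_{n-1-k}(B_r)=\omega_{n-1}\,\sinh^{n-1-k}r\,\cosh^{k}r.
\end{equation*}
Next I would parametrize by the area: setting $x:=|\partial K|/\omega_{n-1}=\sinh^{n-1}r$, one has $\sinh r=x^{1/(n-1)}$ and $\cosh^{2}r=1+x^{2/(n-1)}$. The routine step is then to rewrite $x^{(n-1-k)/(n-1)}(1+x^{2/(n-1)})^{k/2}$ by pulling the prefactor inside the $k/2$-power as $x^{2(n-k-1)/(k(n-1))}$, producing
\begin{equation*}
V_{n-1-k}(B_r)=\omega_{n-1}\Bigl[\,x^{2/k}+x^{2(n-k-1)/(k(n-1))}\,\Bigr]^{k/2},
\end{equation*}
which is exactly the right-hand side of \eqref{eq_add}. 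The rigidity statement then transfers directly from the equality case of Theorem \ref{thm2}(iii).

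Viewed this way, there is no real obstacle in the proof of Theorem \ref{thm3} itself; the work sits in Theorem \ref{thm2}(iii), to which Theorem \ref{thm3} is a purely algebraic reformulation. If one had to derive Theorem \ref{thm2}(iii) from Theorem \ref{thm1} along the way, then for $k\ge 2$ the identity $V_{n-1-k}=n\bigl(W_{k+1}+\tfrac{k}{n-k+1}W_{k-1}\bigr)$ together with Theorem \ref{thm1} applied to both $W_{k\pm 1}$ (with $l=1$) settles the matter at once by monotonicity. The $k=1$ case, for which the identity becomes $V_{n-2}=nW_{2}+W_{0}$ and Theorem \ref{thm1} bounds $W_2$ and $W_0$ in \emph{opposite} directions relative to a fixed value of $W_1$, is the only genuinely delicate point and would presumably require a separate argument (such as a Minkowski-type inequality for h-convex domains or a tailored quermassintegral-preserving flow) in the body of the paper.
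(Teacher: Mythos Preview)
Your proposal is correct and follows the same overall strategy as the paper: both reduce Theorem~\ref{thm3} to evaluating the minimizer from Theorem~\ref{thm2}(iii) (equivalently, inequality~\eqref{kl} at $l=1$) on the geodesic ball with matching area. The difference is purely computational. The paper writes out $f_{k+1}\circ f_1^{-1}$ and $f_{k-1}\circ f_1^{-1}$ explicitly via the representations \eqref{even}--\eqref{odd} of $W_k$ as alternating sums of curvature integrals, treating the odd and even cases separately, and then observes a cancellation in the combination $n f_{k+1}\circ f_1^{-1}+\tfrac{nk}{n-k+1}f_{k-1}\circ f_1^{-1}$. Your route is more direct: since the right-hand side of \eqref{kl} at $l=1$ is nothing but $V_{n-1-k}(B_r)$ for the ball with $|\partial B_r|=|\partial K|$, you compute $V_{n-1-k}(B_r)=\omega_{n-1}\sinh^{n-1-k}r\,\cosh^{k}r$ in one line from $H_k(B_r)=\coth^{k}r$, bypassing the odd/even split entirely. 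This is cleaner and buys you a uniform treatment; the paper's longer computation, on the other hand, makes visible the ``interesting cancellation'' it remarks on at the end of the proof. You also correctly isolate the $k=1$ case as the only genuine subtlety, which the paper likewise handles separately by invoking the result of \cite{GWW2} (built on \cite{CZ} and \cite{LWX}) rather than deriving it from Theorem~\ref{thm1}.
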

Inequality \eqref{eq_add} was called as a hyperbolic Alexandrov-Fenchel inequality in \cite{GWW}.
As mentioned above, \eqref{eq_add} was proved in \cite{LWX} for $k=2$ under a weaker condition, 
in \cite{GWW} for $k=4$ and in \cite{GWW2} for general even $k$. For general odd integer $k$ inequality \eqref{eq_add} was conjectured in
\cite{GWW2} after the authors showed \eqref{eq_add} for $k=1$ with a help of a result of Cheng and Xu \cite{CZ}. For the related work about 
the result of Cheng and Xu \cite{CZ},
see also \cite{DT}, \cite{GW} and \cite{GWX}. 
 
 Recently Theorem \ref{thm3} (for $k$ odd) was used in \cite{GWW_AH} to prove a Penrose type inequality for a higher order mass on asymptotically hyperbolic manifolds.

The approaches  used in \cite{GWW, GWW2, LWX}, and also in \cite{BHW, dLG}, are finding a suitable
geometric quantity, which is monotone under a suitable inverse curvature flow studied by Gerhardt \cite{Gerhardt},
and managing to compute the limit of the  geometric quantity.
However in this paper we will not use an inverse curvature flow. Instead
we will use a (normalized) generalized mean curvature flow to prove Theorem \ref{thm1}. 
The crucial points of this paper are: (i) the choice of  the quermassintegrals $W_k$ as this suitable geometric quantity,
(ii) the use of
 the quermassintegral preserving curvature flows, along which one quermassintegral is preserved and the other is monotone. 
 The flow we consider is
 \begin{equation}\label{main_flow}
 \frac{\partial X}{\partial t}(x,t)=\left\{c(t)-\left(\frac{H_k}{H_l}\right)^{\frac{1}{k-l}}(x,t)\right\}\nu(x,t),
 \end{equation}
where $\nu(\cdot,t)$ is the outer normal of the evolved hypersurface and $c(t)$ is defined by 
\[ c(t)=c_l(t)=\frac{\int_{\S_t} H_k^{\frac{1}{k-l}}H_l^{1-\frac{1}{k-l}}d\mu_t}{\int_{\S_t} H_l d\mu_t}.
\]
We will show that this flow converges exponentially to a geodesic sphere, provided that the initial hypersurface is h-convex.
The study of this flow is motivated by the work of \cite{CM, GL2, Ma, Mc}, who considered the mixed volume (in our words, the curvature integrals) preserving curvature flows in $\RR^n$ and $\HH^n$ respectively. In \cite{Ma} the  isoperimeteric result of Schmidt mentioned above was reproved by a flow method.
The method of using geometric flows to prove geometric inequalities seems to be powerful. Various flows have been employed to prove geometric inequalities, 
see for instance \cite{An2, BHW, dLG, GWW, GWW2, GWX, GL, GL2, GuanW, LWX, Ma,Mc, Schulze}. 

\

The rest of this paper is organized as follows. In Section 2, we present basic concepts and facts about  integral geometry in the hyperbolic space. In Section 3, we study the quermassintegral preserving curvature flows and prove a rigidity result. In Section 4, we choose a special flow to prove our main theorems.

\

\section{Curvature integrals and Quermassintegrals}

In this section, we recall some basic concepts in integral geometry in the hyperbolic space, we refer to Santal\'o's book \cite{San}, Part IV, and Solanes' thesis \cite{Sol} for more details.

For a (geodesically) convex domain $K\subset \HH^n$, the \textit{quermassintegrals} are defined by
\begin{eqnarray}\label{quer}
&&W_k(K):=\frac{(n-k)\o_{k-1}\cdots\o_0}{n\o_{n-2}\cdots\o_{n-k-1}}\int_{\mathcal{L}_k}\chi(L_k\cap K)dL_k, \quad k=1,\cdots,n-1;
\end{eqnarray}
where $\mathcal{L}_k$ is the space of $k$-dimensional totally geodesic subspaces $L_k$ in $\HH^n$ and $dL_k$ is the natural (invariant) measure on $\mathcal{L}_k$. The function $\chi$ is given by $\chi(K)=1$  if $K\neq \emptyset$ and $\chi(\emptyset)=0.$
For simplicity, we also use the notation \begin{eqnarray*}
&& W_0(K)=\hbox{Vol}(K), \quad W_n(K)=\frac{\o_{n-1}}{n}.
\end{eqnarray*}
It is clear from  definition \eqref{quer}  that the quermassintegrals $W_k$, $k=0,1,\cdots,n-1$,  are strictly increasing under set inclusion, i.e., 
\begin{eqnarray}\label{inclus}
\hbox{if }K_1\subsetneqq K_2,\hbox{ then }W_k(K_1)< W_k(K_2).
\end{eqnarray}
This simple fact plays a role in the proof of the convergence of curvature flows considered below.

\

Let $\sigma_k$ be the $k$-th elementary symmetric function $\sigma_k: \RR^{n-1}\to \RR$  defined by 
\begin{eqnarray*}
\sigma_k(\L)=\sum_{i_1<\cdots<i_k} \l_{i_1}\cdots\l_{i_k} \quad \hbox{ for } \L=(\l_1,\cdots,\l_{n-1})\in \RR^{n-1}.
\end{eqnarray*}
As convention, we take $\s_0=1$.
The definition of $\sigma_k$ can be easily extended to the set of all symmetric matrix. The Garding cone $\Gamma_k^+$ is defined as 
\begin{eqnarray*}
\Gamma_k^+=\{\L\in \RR|\sigma_j(\L)>0,\quad \forall j\leq k\}.
\end{eqnarray*} We denote by $ \overline{\Gamma_k^+}$  the closure of $\Gamma_k^+$. 

Let $H_k=H_k(\L)=\frac{\s_k(\L)}{C^{k}_{n-1}}$ the normalized symmetric functions. We have the following Newton-MacLaurin inequalities.
For the proof we refer to a survey of Guan \cite{Guan}.
\begin{proposition}
For $1\leq l<k\leq n-1$ and $\L\in \overline{\Gamma_k^+}$, the following inequalities hold:\begin{eqnarray}\label{N1}
H_{k-1} H_{l}\geq H_kH_{l-1}.
\end{eqnarray}
\begin{eqnarray}\label{N2}
H_{l}\geq H_k^{\frac{l}{k}}.
\end{eqnarray}
Equalities hold in \eqref{N1} or \eqref{N2} if and only if $\l_i=\l_j$ for all $1\leq i,j\leq n-1$.
\end{proposition}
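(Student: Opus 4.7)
The plan is to reduce both inequalities to the single classical Newton inequality
\begin{equation*}
H_k^2 \geq H_{k-1} H_{k+1}, \qquad 1 \leq k \leq n-2,
\end{equation*}
valid for arbitrary $\Lambda \in \RR^{n-1}$. I would prove this by considering the Maclaurin polynomial $p(t) = \prod_{i=1}^{n-1}(1+\lambda_i t) = \sum_{j=0}^{n-1} \sigma_j(\Lambda)\, t^j$, which is real-rooted by construction. By Rolle's theorem all its derivatives are real-rooted as well, and the standard computation on the coefficients of a real-rooted polynomial yields Newton's inequality, with equality iff all $\lambda_i$ coincide.

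Granted this, the cone assumption $\Lambda \in \overline{\Gamma_k^+}$ ensures $H_j \geq 0$ for $j \leq k$. Working first in the interior $\Gamma_k^+$, where $H_j > 0$ for all $j \leq k$, I can rewrite Newton's inequality as $H_{j+1}/H_j \leq H_j/H_{j-1}$. Thus the ratio sequence $r_j := H_j/H_{j-1}$ (with $H_0 := 1$) is nonincreasing in $j$. Inequality \eqref{N1} is then just the statement $r_l \geq r_k$ for $l < k$, which follows at once by iterating this monotonicity.

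For \eqref{N2} I would telescope: $H_k = \prod_{j=1}^k r_j$ and $H_l = \prod_{j=1}^l r_j$. Since the $r_j$ are nonincreasing, the geometric mean of the first $l$ factors dominates the geometric mean of the first $k$:
\begin{equation*}
H_l^{1/l} = \Bigl( \prod_{j=1}^{l} r_j \Bigr)^{1/l} \geq \Bigl( \prod_{j=1}^{k} r_j \Bigr)^{1/k} = H_k^{1/k},
\end{equation*}
which is \eqref{N2}. In either case, equality propagates back to an equality in some instance of Newton's inequality, hence forces all $\lambda_i$ equal.

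The main obstacle is the careful handling of the boundary $\partial \Gamma_k^+$, where some $H_j$ could vanish and the ratios $r_j$ become ill-defined. I would resolve this by first establishing the inequalities in the interior and then extending to the closure by continuity, using that $\Gamma_k^+$ is dense in $\overline{\Gamma_k^+}$ and that both \eqref{N1} and \eqref{N2} are closed conditions in $\Lambda$; the rigidity statement in the boundary case then degenerates to the trivial case $\Lambda = 0$.
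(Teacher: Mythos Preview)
The paper does not prove this proposition itself; it simply refers the reader to Guan's survey. Your route---derive the single Newton inequality $H_j^2 \geq H_{j-1}H_{j+1}$ from real-rootedness via Rolle, then exploit the monotone ratio sequence $r_j = H_j/H_{j-1}$---is the standard classical argument and correctly yields both \eqref{N1} and \eqref{N2} on the open cone $\Gamma_k^+$, with continuity extending the non-strict inequalities to the closure.

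The gap is in your handling of the equality case. Your claim that equality in a single Newton inequality forces all $\lambda_i$ to coincide is false in general: with $n-1=3$ and $\Lambda=(1,0,0)$ one has $H_1=\tfrac13$, $H_2=H_3=0$, so $H_2^2=H_1H_3=0$ while the $\lambda_i$ are not all equal. The same $\Lambda$ lies in $\overline{\Gamma_3^+}$ and, with $k=3$, $l=2$, gives equality in both \eqref{N1} and \eqref{N2}; this refutes your final assertion that on the boundary ``the rigidity statement degenerates to the trivial case $\Lambda=0$''. In the open cone your rigidity argument can be salvaged---equality in \eqref{N1} forces $r_l=\cdots=r_k$, and a more careful trace-back (using, when $k=n-1$, the reciprocal substitution $\lambda_i\mapsto\lambda_i^{-1}$ to cover the top Newton index) does give $\lambda_1=\cdots=\lambda_{n-1}$---but on $\partial\Gamma_k^+$ the equality characterization as stated simply fails. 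This imprecision is already present in the proposition's own statement, not only in your attempt; the paper's main applications concern h-convex hypersurfaces, where all $\kappa_i>1$ and one is safely in the interior.
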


\

For a  domain $K\subset \HH^n$, if the boundary $\p K$ is $C^2$-differentiable,  the \textit{(normalized) $k$-th mean curvatures} are
\begin{eqnarray*}
H_k(x)=H_k(\k(x))\quad \hbox{ for }x\in \p K,\quad k=0,\cdots,n-1,
\end{eqnarray*}
where $\k=(\k_1,\cdots,\k_{n-1})$ is the set of  the principal curvatures of $\p K$ as an embedding in $\HH^n$.
The \textit{curvature integrals} are defined by
\begin{eqnarray*}
V_{n-1-k}(K)=\int_{\p K} H_k d\mu, \quad k=0,\cdots, n-1,
\end{eqnarray*}
where $d\mu$ is the area element on $\p K$ induced from $\HH^n$.

The curvature integrals have a similar meaning of the mixed volume in the Euclidean space, in view of the Steiner formula (see \cite{San},  IV.18.4) 
which says that for a smooth convex domain $K$ and some positive number $\rho\in \RR$, its parallel set $K[\rho]:=\{x\in \HH^n| d_{\HH^n}(x,K)\leq \rho\}$ has the volume
\begin{eqnarray*}
\hbox{Vol}(K[\rho])=\hbox{Vol}(K)+\sum_{k=0}^{n-1} C_{n-1}^{k}V_k(K)\int_0^\rho \cosh^k (s)\sinh^{n-1-k}(s)ds.
\end{eqnarray*}

Recall that the quermassintgrals and the curvature integrals are related (see e.g. \cite{Sol2}, Proposition 7) by
\begin{eqnarray}\label{relation}
V_{n-1-k}(K)&=& n\left( W_{k+1}(K)+\frac{k}{n-k+1}W_{k-1}(K)\right),\quad k=1,\cdots,n-1,\\ V_{n-1}(K)&=&n W_1(K)=|\p K|.\nonumber
\end{eqnarray}
 From \eqref{relation} it is easy to express $W_k$ as a linear combination of several curvature integrals (see  \cite{San}, IV.17.4, \cite{Sol2}, Corollary 8):

\begin{itemize}\item for $1\leq k\leq n-1$ and $k$ is  even,
\begin{eqnarray}\label{even}
W_k(K)&=&\frac1n \sum_{i=0}^{\frac{k}{2}-1} (-1)^i\frac{(k-1)!!(n-k)!!}{(k-1-2i)!!(n-k+2i)!!}\int_{\p K} H_{k-1-2i}d\mu\nonumber\\&&+(-1)^{\frac{k}{2}}\frac{(k-1)!!(n-k)!!}{n!!}\hbox{Vol}(K);
\end{eqnarray}
\item for $1\leq k\leq n-1$ and $k$ is  odd,
\begin{eqnarray}\label{odd}
&& W_k(K)=\frac1n \sum_{i=0}^{\frac{k-1}{2}} (-1)^i\frac{(k-1)!!(n-k)!!}{(k-1-2i)!!(n-k+2i)!!}\int_{\p K} H_{k-1-2i}d\mu.
\end{eqnarray}
\end{itemize}
Here the notation $k!!$ means the product of all odd (even) integers up to  odd (even) $k$. For $k=n$, the formulas \eqref{even} and \eqref{odd} can be viewed as the Gauss-Bonnet-Chern theorem for domains in the hyperbolic space.

 From \eqref{even} and \eqref{odd}, one can see the difference between quermassintegrals $W_{2k}$ and $W_{2k+1}$. In fact, $W_{2k}$ is extrinsic
 and $W_{2k+1}$ is intrinsic, namely it depends only on the induced metric $g$ on $\partial K$.  The latter follows from the fact that $H_{2k}$ can be
 expressed in terms of intrinsic geometric quantities, the Gauss-Bonnet curvatures. For the proof see \cite{GWW2}.

\
 
\section{Quermassintegral preserving curvature flows}

Let $K_0\in \KK$ be an h-convex bounded domain in $\HH^n$ with smooth boundary $\S_0=\p K_0$.
We consider the following curvature evolution equation
\begin{eqnarray}\label{flow}
\frac{\p X}{\p t}(x,t)=(c(t)-F(\mathcal{W}(x,t)))\nu(x,t),
\end{eqnarray}
where $X(\cdot,t): M^{n-1}\to \HH^n$ are parametrizations of a family of hypersurfaces $\S_t\subset\HH^n$ which encloses $K_t$, $\nu(\cdot,t)$ is the unit outward normal to $\S_t$, $F$ is a smooth curvature function evaluated at the matrix of the Weingarten map $\mathcal{W}$ of $\S_t$. The time dependent term $c(t)$ will be explained later.

The function $F$ should have the following properties (P):
\begin{itemize}
\item $F(A)=f(\l(A))$, where $\l(A)=(\l_1,\cdots,\l_{n-1})$ are the eigenvalues of the matrix $A$ and 
$f$ is a smooth, symmetric function defined on the positive cone $$\Gamma^+=\{\l\in \RR^{n-1}|\l_i>0, \forall i=1,\cdots, n-1\};$$
\item $f$ is positively homogeneous of degree $1$: $f(t\l)=tf(\l)$ for any $t>0$;
\item $f$ is strictly increasing in each argument: $\frac{\p f}{\p \l_i}>0;$
\item $f$ is normalized by setting $f(1,\cdots,1)=1$;
\item $f$ is concave and inverse concave, i.e., ${f}^*(\l):=-f(\l_1^{-1},\cdots,\l_{n-1}^{-1})$ is concave.
\end{itemize}

We use the notation $\dot{f}^i=\frac{\p f}{\p \l_i}$, $\ddot{f}^{ij}=\frac{\p f}{\p \l_i\p \l_j}$, $F^{ij}=\frac{\p F}{\p A_{ij}}$ and $F^{ij,rs}=\frac{\p^2 F}{\p A_{ij} \p A_{rs}}$. Also we use ``$\nabla$" or ``;" to denote the covariant derivative on hypersurfaces. Unless stated otherwise, the summation convention is used throughout this paper. For our purpose, $F$ is viewed as a function on $h_i^j=g^{jk}h_{ik}$, i.e.,  $F=F(h_{i}^j)=F(g^{jk}h_{ik})=f(\kappa)$, where  $g_{ij}$ and $h_{ij}$ the first and second fundamental form respectively and $\k=(\k_1,\cdots,\k_{n-1})$ is the set of the principal curvatures.  




\

We have the evolution equations for the quermassintegrals and the curvature integrals associated with $K_t$ under  flow \eqref{flow}.
\begin{proposition} \label{pro3.1}Along  flow \eqref{flow}, we have 
\begin{eqnarray}\label{var1}
\frac{d}{dt}\hbox{Vol}(K_t)=\int_{\S_t} \left(c(t)-F\right)d\mu_t;
\end{eqnarray}
\begin{eqnarray}\label{var2}
\frac{d}{dt}|\S_t|=\int_{\S_t} (n-1)H_1\left(c(t)-F\right)d\mu_t;
\end{eqnarray}
\begin{eqnarray}\label{var3}
&&\frac{d}{dt}\int_{\S_t} H_k d\mu_t=\int_{\S_t} \left\{(n-1-k)H_{k+1}+kH_{k-1}\right\}\left(c(t)-F\right)d\mu_t, \quad k=1,\cdots, n-1;
\end{eqnarray}
\begin{eqnarray}\label{var4}
\frac{d}{dt}W_k(K_t)=\frac{n-k}{n}\int_{\S_t} H_{k}\left(c(t)-F\right)d\mu_t, \quad k=0,\cdots,n-1.
\end{eqnarray}
\end{proposition}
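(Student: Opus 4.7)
The plan is to deduce all four identities from the standard first-variation formulas for a hypersurface in $\HH^n$ evolving by $\p_t X = \phi \nu$ with $\phi = c(t)-F$. First I would recall
\begin{equation*}
\p_t g_{ij} = 2\phi h_{ij},\qquad \p_t d\mu_t = \phi\,\tr(h)\,d\mu_t = (n-1)H_1\phi\,d\mu_t,
\end{equation*}
from which \eqref{var1} and \eqref{var2} follow immediately. The main computation is the evolution of the Weingarten map. Because $\HH^n$ has sectional curvature $-1$, so that $\ov R(\nu,\p_i,\p_j,\nu) = -g_{ij}$, one obtains
\begin{equation*}
\p_t h_i^j = -\nabla^j\nabla_i \phi - \phi\, h_i^k h_k^j + \phi\,\delta_i^j.
\end{equation*}
I would double-check the sign of the ambient-curvature term by evolving a geodesic sphere of radius $r(t)$: with $h_i^j = \coth(r)\delta_i^j$ both sides reduce to $-\phi\sinh^{-2}(r)\delta_i^j$.

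Then for \eqref{var3} I would apply the formula above to $\s_k = C_{n-1}^k H_k$ via $\p_t \s_k = (T_{k-1})_i^j\,\p_t h_j^i$, where $T_{k-1}$ denotes the $(k-1)$-st Newton tensor. Combining the standard algebraic identities $(T_{k-1})_i^i = (n-k)\s_{k-1}$ and $(T_{k-1})_i^j h_j^\ell h_\ell^i = \s_1\s_k-(k+1)\s_{k+1}$ with the area-element evolution, the mixed contribution $\phi\,\s_1\s_k$ cancels against the $\s_k\,\p_t d\mu_t$ term. The crucial step is then an integration by parts on $\int_{\S_t}(T_{k-1})_i^j\nabla^i\nabla_j\phi\,d\mu_t$: since $\HH^n$ is a space form, the Codazzi equation forces $\nabla_j(T_{k-1})_i^j = 0$, so this term vanishes. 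Converting back from $\s_k$ to $H_k$ via the binomial identities $(k+1)C_{n-1}^{k+1}/C_{n-1}^k = n-1-k$ and $(n-k)C_{n-1}^{k-1}/C_{n-1}^k = k$ then yields exactly \eqref{var3}.

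Finally, I would derive \eqref{var4} by induction on $k$ from \eqref{var3} using the relation \eqref{relation}. The base cases $k=0$ and $k=1$ are \eqref{var1} and $\tfrac{1}{n}$ times \eqref{var2}. For the inductive step, differentiating the identity
\begin{equation*}
n W_{k+1}(K_t) = \int_{\S_t} H_k\,d\mu_t - \frac{nk}{n-k+1}\, W_{k-1}(K_t)
\end{equation*}
and substituting \eqref{var3} together with the inductive hypothesis for $W_{k-1}$, the $H_{k-1}$ contributions cancel exactly and one is left with $\frac{d}{dt}W_{k+1}(K_t) = \frac{n-k-1}{n}\int_{\S_t} H_{k+1}\phi\,d\mu_t$, completing the induction. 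The only real subtleties throughout are pinning down the sign of the ambient-curvature term in $\p_t h_i^j$ (the geodesic-sphere check resolves this) and invoking divergence-freeness of Newton tensors on hypersurfaces in space forms; both are standard, so beyond careful bookkeeping I do not expect genuine obstacles.
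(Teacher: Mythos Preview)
Your proposal is correct and follows essentially the same route as the paper. For \eqref{var1}--\eqref{var3} the paper simply cites Reilly's work rather than carrying out the computation you sketch (evolution of $h_i^j$, Newton tensors, divergence-freeness in a space form), but your argument is the standard one underlying that citation; for \eqref{var4} your induction via \eqref{relation} with base cases $k=0,1$ is exactly what the paper does, and the cancellation of the $H_{k-1}$ terms is the same.
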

\begin{proof}
\eqref{var1}--\eqref{var3}  are now well-known and were proved in \cite{Reilly}. We now prove \eqref{var4} by induction. In view of \eqref{var1} and \eqref{var2}, it is true for $k=0,1$.  Assume it is true for $k-1$, we can compute by using \eqref{relation}, \eqref{var3} and the inductive assumption that
\begin{eqnarray*}
\frac{d}{dt}W_{k+1}(K_t)&=&\frac{1}{n}\frac{d}{dt}\int_{\S_t} H_{k}d\mu_t-\frac{k}{n-k+1}\frac{d}{dt}W_{k-1}(K_t)\\&=&\frac{1}{n}\int_{\S_t}\left((n-1-k)H_{k+1}+kH_{k-1}\right)\left(c(t)-F\right)d\mu_t\\&&-\frac{k}{n-k+1}\frac{n-k+1}{n}\int_{\S_t} H_{k-1}\left(c(t)-F\right)d\mu_t\\&=&
\frac{n-k-1}{n}\int_{\S_t} H_{k+1}\left(c(t)-F\right)d\mu_t.
\end{eqnarray*}
\end{proof}

The choice of $c(t)$  depends on which geometric quantity we want to preserve. In this paper, we will take 
\begin{eqnarray}\label{ct}
c(t)=c_l(t):=\frac{\int_{\S_t}H_l F d \mu_t}{\int_{\S_t}H_l d\mu_t}
\end{eqnarray}
so that the flow preserves $W_l$.

\begin{lemma} With the choice of $c(t)$ by \eqref{ct} flow \eqref{flow} preserves the quermassintegral $W_l$.
\end{lemma}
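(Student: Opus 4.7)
The plan is a direct substitution, essentially a one-line verification using Proposition \ref{pro3.1} together with the defining formula \eqref{ct} for $c(t)$.

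First, I would specialize formula \eqref{var4} to $k=l$, which gives
\begin{equation*}
\frac{d}{dt}W_l(K_t)=\frac{n-l}{n}\int_{\S_t} H_{l}\left(c(t)-F\right)d\mu_t.
\end{equation*}
Next, I would split the right-hand side as
\begin{equation*}
\frac{n-l}{n}\left(c(t)\int_{\S_t} H_l\, d\mu_t - \int_{\S_t} H_l F\, d\mu_t\right),
\end{equation*}
and then substitute the definition $c(t)=c_l(t)=\frac{\int_{\S_t}H_l F\, d\mu_t}{\int_{\S_t}H_l\, d\mu_t}$. The two terms inside the parentheses cancel, yielding $\frac{d}{dt}W_l(K_t)=0$, which proves that $W_l$ is preserved along the flow.

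There is no real obstacle here, since the heavy lifting was already done in establishing the variational formula \eqref{var4} in Proposition \ref{pro3.1}. The only minor point to notice is that the denominator $\int_{\S_t}H_l\, d\mu_t$ in the definition of $c_l(t)$ is strictly positive for all $t$, which follows from the h-convexity of $K_0$: as long as the flow preserves h-convexity (so that all principal curvatures remain $\geq 1$), all $H_l$ are strictly positive on $\S_t$, and hence $c_l(t)$ is well-defined. This regularity issue is tacit at this stage and will be addressed later when short-time existence and preservation of h-convexity along \eqref{flow} are discussed.
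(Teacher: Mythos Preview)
Your proposal is correct and follows exactly the same approach as the paper: specialize the variational formula \eqref{var4} to $k=l$ and substitute the definition \eqref{ct} of $c_l(t)$ to see the integral vanishes. Your added remark on the positivity of the denominator is a reasonable aside but goes slightly beyond what the paper's one-line proof records.
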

\begin{proof} By \eqref{var4} we have
\[\frac{d}{dt}W_l(K_t)=\frac {n-l}n\int_{\S_t}H_l(c_l(t)-F) d\mu_t=0.\]
\end{proof}

Under the assumptions (P) on $F$ and the assumption that the initial domain is h-convex, the long time existence and convergence of the flow \eqref{flow} can be proved.

\begin{theorem}\label{thmflow}
Let $K_0\in \KK$ be an h-convex  domain in $\HH^n$ with smooth boundary $\S_0$. Let $F$ be a function satisfying the properties (P) and $c(t)$ be defined in \eqref{ct} for some $l\in\{0,\cdots, n-1\}$. Then  flow \eqref{flow} has a smooth solution $X(t)$ for $t\in [0,\infty)$. Moreover, $X(t)$ converges exponentially to a geodesic sphere with the same quermassintegral $W_l$ as $K_0$.
\end{theorem}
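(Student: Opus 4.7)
The plan is to follow the standard parabolic strategy for curvature-preserving flows: establish short-time existence (which is automatic from $\dot f^i > 0$), propagate h-convexity, derive a priori $C^0$ and curvature estimates, bootstrap to higher regularity, and finally analyze the asymptotic behavior. The critical feature throughout is the choice \eqref{ct} of $c(t)$, which both preserves $W_l$ and, as will become clear, keeps the nonlocal correction from destroying h-convexity.

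The most delicate step is to show that h-convexity is preserved, i.e., $\kappa_{\min}(\cdot,t)\geq 1$ for all $t$. I would compute the evolution of the Weingarten tensor $h_i^j$ along \eqref{flow}; in $\HH^n$ the Gauss equation produces an extra contribution from the ambient sectional curvature $-1$ that is favorable, and Hamilton's tensor maximum principle applied to $h_i^j-\delta_i^j$ yields the desired lower bound, provided that the concavity of $F$ controls the second-order $F^{ij,rs}$ term in the right direction, the homogeneity identity $F^{kl}h_{kl}=F$ is used, and $c(t)\geq 1$. The last inequality is a consequence of h-convexity itself, since $F\geq f(1,\dots,1)=1$ by monotonicity and homogeneity, so the argument is self-consistent and can be propagated from the initial time by short-time continuation. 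Granted h-convexity, the $C^0$ estimate follows from the strict monotonicity \eqref{inclus} of $W_l$ under inclusion: any h-convex domain is sandwiched between its inscribed and circumscribed geodesic balls with comparable radii, so preservation of $W_l(K_t)=W_l(K_0)$ confines both radii to a fixed compact subinterval of $(0,\infty)$. Upper bounds on the principal curvatures are then obtained from the standard maximum-principle argument applied to $F/(u-c_0)$ with $u$ a suitable support-type function, using concavity, homogeneity, and the $C^0$ bound. Uniform parabolicity together with concavity in the Weingarten matrix feed Evans--Krylov and Schauder theory to give uniform $C^{k,\alpha}$ bounds for every $k$, hence long-time existence.

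For convergence, along any $t_i\to\infty$ compactness yields a smooth subsequential limit $\Sigma_\infty$ which is static for \eqref{flow} and therefore satisfies $F\equiv c_\infty$ on $\Sigma_\infty$. A closed h-convex hypersurface in $\HH^n$ on which the symmetric, degree-one, monotone, concave function $F$ is constant must be a geodesic sphere: this can be established either through a hyperbolic Minkowski integral identity in the Alexandrov style, or through a Montiel--Ros/Brendle-type argument combining concavity of $F$ with the Heintze--Karcher inequality adapted to $\HH^n$. The strict monotonicity \eqref{inclus} of $W_l$ then selects the unique radius $r_\infty$ with $f_l(r_\infty)=W_l(K_0)$, so the limit sphere is independent of the subsequence, giving full convergence $\Sigma_t\to S_{r_\infty}$. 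Exponential convergence is obtained by linearization at $S_{r_\infty}$: the linearized operator is self-adjoint with a positive spectral gap once the translational kernel is quotiented out by the nonlocal constraint $W_l(K_t)=W_l(K_0)$, and a standard interpolation argument upgrades the resulting decay to an exponential rate. The main obstacle I anticipate is the preservation of h-convexity: the hyperbolic Gauss equation and the nonlocal correction $c(t)$ make the tensor maximum-principle computation genuinely subtle, and its sign structure relies precisely on the h-convexity-induced bound $c(t)\geq 1$ together with the interplay between concavity, homogeneity, and the ambient curvature of $\HH^n$.
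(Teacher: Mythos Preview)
Your overall architecture matches the paper's, but there are two genuine gaps.

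\medskip
\noindent\textbf{Preservation of h-convexity.} You write that ``the concavity of $F$ controls the second-order $F^{ij,rs}$ term in the right direction''. It does not. In the evolution of $S_{ij}=h_{ij}-g_{ij}$ the gradient term is $F^{kl,rs}S_{kl;i}S_{rs;j}$; contracted with a null eigenvector $v$ of $S$ this gives $F^{kl,rs}(S_{kl;i}v^i)(S_{rs;j}v^j)$, which by concavity is \emph{nonpositive} and therefore works \emph{against} a lower bound on $S$. The first-order vanishing condition $S_{ij;k}v^j=0$ does not kill this term even after Codazzi. What actually controls it is the \emph{inverse concavity} hypothesis in (P), which you never invoke. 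The paper exploits inverse concavity through a microscopic convexity/constant-rank theorem (adapted from Bian--Guan) to pass from h-convex initial data to strictly h-convex $\Sigma_t$ for $t>0$, and then again through Andrews' pinching estimates to propagate the strict bound $h_{ij}-g_{ij}\geq\varepsilon g_{ij}$ and the pinching $h_{ij}-g_{ij}\geq\varepsilon(H_1-1)g_{ij}$. Your observation that $c(t)\geq 1$ is correct and useful, and the zero-order reaction terms do come out nonnegative at a null eigenvector (indeed they reduce to $F^{kl}S_{lr}S_k^r g_{ij}$), but that is not the obstruction; the obstruction is the gradient term, and for that concavity alone is insufficient.

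\medskip
\noindent\textbf{Convergence.} You pass to a subsequential limit $\Sigma_\infty$ and assert it is ``static for \eqref{flow} and therefore satisfies $F\equiv c_\infty$'', but you give no mechanism forcing the limit to be stationary: uniform higher-order bounds plus compactness only give that some subsequence converges to \emph{some} hypersurface, not to an equilibrium. One needs a monotone quantity whose derivative controls the distance to a sphere. The paper does not argue this way; instead, following Makowski, it shows that the pinching of the principal curvatures improves at an exponential rate, which forces $\Sigma_t$ to become umbilic exponentially fast and hence to converge to a geodesic sphere of the correct $W_l$. Your proposed rigidity-at-the-limit step (constant $F$ implies sphere) and the linearization for the rate are then unnecessary once exponential pinching decay is in hand.
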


\begin{proof} The proof will be divided into two steps.

\

\noindent\textbf{Step I.} The flow \eqref{flow} exists at least in a short time interval $[0,T^*)$ for some $T^*>0$ and the evolving hypersurface $\S_t$ is strictly h-convex for all $t\in (0,T^*).$

The short time existence is now well-known, since the third condition in (P) ensures that  the  flow is strictly parabolic.  
To prove the strict h-convexity, we shall use the following constant rank theorem.

\begin{theorem}\label{rank}Let $\S_t$ be a smooth solution to the flow \eqref{flow} in $[0,T]$ for some $T>0$ which is h-convex, i.e.,  the matrix $(S_{ij})=(h_{ij}-g_{ij})\geq 0$ for $t\in [0,T]$. Then $(S_{ij})$ is of constant rank $l(t)$ for each $t\in (0,T]$ and $l(s)\leq l(t)$ for all $0<s\leq t\leq T$.
\end{theorem}

\begin{proof}
The proof follows similar arguments  as that of the proof of Proposition 5.1 in \cite{BG}. 
For the convenience of the readers, we sketch the proof.

The h-convexity of $\S_0$ means that $(S_{ij})\geq 0$ at $t=0$. For $\varepsilon>0$, define a symmetric matrix $W=(S_{ij}+\varepsilon g_{ij})$. Let $l(t)$ be the minimal rank of $(S_{ij}(x,t))$. For a fixed $t_0\in (0,T]$, let $x_0\in S_{t_0}$ such that $(S_{ij}(x,t_0))$ attains its minimal rank at $x_0$. Set $\phi(x,t)=\sigma_{l+1}(W(x,t))+\frac{\sigma_{l+2}}{\sigma_{l+1}}(W(x,t)).$ It is proved in Section 2 in \cite{BG} that $\phi$ is in $C^{1,1}$. We will show that there are constants $C_1, C_2$ and $\delta$, depending on  $\|X\|_{C^{3,1}(M\times [0, T^*))}$ but independent of $\varepsilon$ and $\phi$, such that in some neighborhood $\mathcal{O}$ of $x_0$ and for $t\in (t_0-\delta,t_0]$, \begin{eqnarray}\label{strongmax1}
F^{ij}\phi_{;ij}-\frac{\p}{\p t} \phi\leq C_1\phi+C_2|\nabla \phi|.
\end{eqnarray}
As  in \cite{BG}, in $\mathcal{O}\times (t_0-\delta,t_0]$, the index set $\{1,\cdots,n-1\}$ can be divided into two subsets $B$ and $G$, where for $i\in B$, the eigenvalues $\tilde{\l}_i$ of $W$ is small and for $j\in G$, $\tilde{\l}_j$ is uniformly positive away from $0$. By choosing suitable coordinates, we may assume at each point of computation, $W_{ij}$ is diagonal.

 One can verify the evolution equation for $S_{ij}$ (see e.g. (4.23) in \cite{Gerhardt}):
\begin{eqnarray}\label{evolv}
\frac{\p}{\p t}S_{ij}&=& F^{kl} h_{ij;kl}+F^{kl,rs}h_{kl;i} h_{rs;j}+F^{kl}h_{lr}h_k^rh_{ij}-c(t)h_i^kh_{jk}\nonumber\\&&-(2F-c(t))g_{ij}+F^{kl}g_{kl}h_{ij}-2(F-c(t))h_i^kS_{jk}\nonumber\\
&=&F^{kl}S_{ij;kl}+F^{kl,rs}S_{kl;i}S_{rs;j}\nonumber\\
&&+F^{kl}h_{lr}h_k^rS_{ij}-c(t)S_i^kS_{jk}-2c(t)S_{ij}+F^{kl}g_{kl}S_{ij}-2(F-c(t))h_i^kS_{jk}\nonumber\\
&&+F^{kl}h_{lr}h_k^rg_{ij}+(F^{kl}g_{kl}-2F)g_{ij}.
\end{eqnarray}
The last line of \eqref{evolv} can be further computed as
\begin{eqnarray}\label{lastline}
&&F^{kl}h_{lr}h_k^rg_{ij}+(F^{kl}g_{kl}-2F)g_{ij}\nonumber\\&=&\left(F^{kl}S_{lr}S_k^r+2F^{kl}S_{kl}+F^{kl}g_{kl}+F^{kl}g_{kl}-2F^{kl}h_{kl}\right)g_{ij}\nonumber\\
&=&F^{kl}S_{lr}S_k^rg_{ij}\geq 0,
\end{eqnarray}
since $F^{kl}$ is positive definite and $F^{kl}h_{kl}=F$ due to the $1$-homogeneity of $f$.

Let $O(\phi)$ denote the quantity which can be controlled by $C\phi$ for a universal constant $C$ depending  on  $\|X\|_{C^{3,1}(M\times [0, T^*))}$ but independent of $\varepsilon$ and $\phi$. Notice that $\varepsilon=O(\phi)$ near $(x_0,t_0)$ (see (3.8)  in \cite{BG}). 
With help of this, we can compute by using \eqref{evolv} and \eqref{lastline} that
\begin{eqnarray}\label{strongmax}
F^{kl}\phi_{;kl}-\frac{\p}{\p t}\phi&=&\phi^{ii}(F^{kl}W_{ii;kl}-\frac{\p}{\p t} W_{ii})+F^{kl}\phi^{ij,rs}W_{ij;r}W_{kl;s}\nonumber
\\&\leq&\phi^{ii}\big(-F^{kl,rs}W_{kl;i} W_{rs;i}-F^{kl}h_{lr}h_k^rW_{ii}\nonumber
\\&&+c(t)W_i^kW_{ik}+2c(t)W_{ii}-F^{kl}g_{kl}W_{ii}+2(F-c(t))h_i^kW_{ik}\big)\nonumber
\\&&+F^{kl}\phi^{ij,rs}W_{ij;k}W_{kl;s}+O(\phi).
\end{eqnarray}
Here we use the notation $\phi^{ij}=\frac{\p \phi}{\p W_{ij}}$ and $\phi^{ij,kl}=\frac{\p^2 \phi}{\p W_{ij}\p W_{kl}}$.

Now since $W_{ij}$ satisfies Codazzi property, $\phi^{jj}=O(\phi)$ for $j\in G$ ((3.14) in \cite{BG}) and  $W_{ii}\leq \phi$ for $i\in B$,  we can use the same argument as Theorem 3.2 in \cite{BG} to reduce \eqref{strongmax} to  the following inequality as corresponding to inequality (3.19) in \cite{BG}:

\begin{eqnarray*}
&&F^{kl}\phi_{;kl}-\frac{\p}{\p t}\phi\\&\leq&-\phi^{ii}F^{kl,rs}W_{kl;i} W_{rs;i}+F^{kl}\phi^{ij,rs}W_{ij;k}W_{kl;s}+O(\phi)
\\&\leq&O(\phi+\sum_{i,j\in B}|\nabla W_{ij}|)-\frac{1}{\sigma_1(B)}\sum_{k,l}\sum_{i,j\in B,i\neq j} F^{kl}W_{ij;k}W_{ij;l}\\&&-\frac{1}{\sigma_1^3(B)}\sum_{k,l}\sum_{i\in B} F^{kl}\left(W_{ii;k}\sigma_1(B)-W_{ii}\sum_{j\in B}W_{jj;k}\right)\left(W_{ii;l}\sigma_1(B)-W_{ii}\sum_{j\in B}W_{jj;l}\right)\\&&-\sum_{i\in B}\left[\sigma_l(G)+\frac{\sigma_1^2(B|i)-\sigma_2(B|i)}{\sigma_1^2(B)}\right]\cdot\nonumber\\&&\quad\quad\quad\cdot\left[\sum_{k,l,r,s\in G}F^{kl,rs}W_{kl;i}W_{rs;i}+2\sum_{k,l\in G}F^{kl}\sum_{j\in G}\frac{1}{\tilde{\l}_j}W_{ij;k}W_{ij;l}\right].
\end{eqnarray*}
Here $\sigma_k(B)$ denotes the symmetric functions $\sigma_k$ on the eigenvalues $\tilde{\l}_i$ for $i \in B$.

The analysis  in Theorem 3.2 in \cite{BG} shows that the right hand side of above inequality can be controlled by 
$\phi+|\nabla \phi|-C\sum_{i,j\in B}|\nabla W_{ij}|$. We remark that the inverse concavity of $F$ plays an crucial role in this analysis. Hence we arrive at \eqref{strongmax1}. Now letting $\varepsilon\to 0$ and by the standard strong maximum principle for parabolic equations, we conclude that $S_{ij}$ is of constant rank $l(t)$ and $l(t)$ is non-decreasing with respect to $t$.

\end{proof}

We return to the proof of Theorem \ref{thmflow}, Step I. We follow closely the argument in \cite{BG}.

We may approximate $\S_0$ by a family of strictly h-convex hypersurfaces $\S_0^\varepsilon$. By continuity, there is $\delta>0$ (independent of $\varepsilon$), such that there is a solution $\S_t^\varepsilon$ to \eqref{flow} for $t\in [0,\delta]$. Then $\S_t^\varepsilon$ must be strictly h-convex  for  $t\in [0,\delta]$ by Theorem \ref{rank}. Taking $\varepsilon\to 0$, we have that  $\S_t$ is h-convex for  $t\in [0,\delta]$. This implies that the set $\{t\in [0,T]| \S_t \hbox{ is h-convex}\}$ is open. It is obviously closed and non-empty. Therefore, $\S_t$ is h-convex for $t\in [0,T]$ . Recall that for every closed hypersurface, there exists at least one point which is strictly h-convex. Therefore by Theorem \ref{thmflow} again, $\S_t$ is strictly h-convex for all $t\in (0,T]$. We finish the proof of Step I.

\

\noindent\textbf{Step II:} Let  $\S_{t_0}$, $t_0\in (0,T^*)$ be a strictly h-convex hypersurface evolving by \eqref{flow}, then the long time existence and convergence can be proved.

Starting with a strictly h-convex hypersurface, the flow \eqref{flow} is quite similar to that considered by Makowski \cite{Ma}. The  difference is that the flows he considered preserve the curvature integrals and ours preserve the quermassintegrals. However, this difference makes a very big difference in applications, though the analytic part of both flows
is quite similar. For the convenience of the readers, we sketch the proof and point out where  the difference is.


Let $\H^n=\RR\times\SS^{n-1}$ with the hyperbolic metric 
\begin{eqnarray*}
\bar{g}=dr^2+\sinh^2r g_{\SS^{n-1}}
\end{eqnarray*}
where $g_{\SS^{n-1}}$ is the standard round metric on the $(n-1)$-dimensional  unit sphere. 
Denote by $\<\cdot,\cdot\>$ the metric $\bar{g}$, and by $\bar{\nabla}$ 
the covariant derivative on $\H^n$. 

\

\noindent\textbf{1.} As long as the flow exists, the strict h-convexity and the pinching of the principal curvatures are preserved (Lemma 4.4 in \cite{Ma}), i.e.,
\begin{itemize}
\item if $h_{ij}-g_{ij}\geq \varepsilon g_{ij}$ at $t=t_0$  for some $\varepsilon>0$, then it holds as long as the flow exists;
\item if $h_{ij}-g_{ij}\geq \varepsilon (H_1-1) g_{ij}$ at $t=t_0$  for some $\varepsilon>0$,
 then it holds  as well as $h_{ij}-g_{ij}\geq \varepsilon (F-1) g_{ij}$ holds as long as the flow exists.
\end{itemize}
This can be proved by using Andrews' pinching estimates \cite{An}. The first statement also follows from Theorem \ref{rank}.

An important consequence of the pinching estimate is that the flow is always uniformly parabolic, i.e.,  there exists some constant $c_0$, depending only on $\S_{t_0}$, such that

\begin{eqnarray}\label{unip}
c_0^{-1}g^{ij}\leq F^{ij}\big((h_j^i)(x,t)\big)\leq c_0g^{ij}, \quad t_0\leq t< T^*.
\end{eqnarray}

\

\noindent\textbf{2.} As long as the flow exists, the speed function $F$ is bounded by a constant depending only on the initial hypersurface $\S_{t_0}$. Consequently, the time-dependent term $c(t)$ is bounded, and $|\frac{\p X}{\p t}|$ is bounded.
By the pinching estimate in Step II.1, one can easily deduce the upper boundedness of the principal curvatures. 

The proof of the boundedness of $F$ is more technique. Hence we give more details for this step.

\

\noindent{\bf 2.1}.  As long as the flow exists, the inner radius and the outer radius of $K_t$ can be uniformly bounded by some positive constants $r_0$ and $R_0$, dependent only on the initial hypersurface $\S_{t_0}$, respectively. 

In fact, this is the only place where the property of preserving the quermassintegrals is used. We verify this here.
Let $r(t)$ and $R(t)$ be the inner radius and outer radius of $\S_t$ respectively. Let $r_{t_0}$ be the number  so that $W_l(K_{t_0})=W_l(B_{r_{t_0}}).$ By virtue of \eqref{inclus}, we have that
\begin{eqnarray*}
W_l(B_{R(t)})\geq W_l(K_t)=W_l(K_{t_0})=W_l(B_{r_{t_0}}).
\end{eqnarray*}
Thus $R(t)\geq r_{t_0}$. According to Step I, the h-convexity is preserved. A remarkable feature of the h-convexity is that the inner radius and the outer radius are comparable (see \cite{Ma}, Theorem 5.2   or \cite{BM}, Theorem 3.1). Namely, there is a constant $C>1$  such that $$r(t) \le R(t) \le Cr(t).$$ Hence
$$r(t)\geq C^{-1}R(t)\geq C^{-1}r_{{t_0}}:=r_0.$$ Similarly, from the monotonicity of the quermassintegral \eqref{inclus}, we have
\[W_l(B_{r(t)})\le W_l(K_t)=W_l(K_{t_0})=W_l(B_{r_{t_0}}),\]
which implies $r(t)\le r_{t_0}$. Hence,  we have $$R(t)\le Cr(t)\le Cr_{t_0}:=R_0.$$

\

\noindent{\bf  2.2}. 
Fix a time $t_1\in [t_0,T^*)$. Since the inner radius of $K_t$ is uniformly bounded, we can assume $B_{r_{t_1}}(p_{t_1})\subset K_{t_1}$ is an enclosed ball with the center $p_{t_1}$ and the radius $r_{t_1}\geq r_0$, then we can show that $B_{\frac12r_{t_1}}(p_{t_1})\subset K_{t}$ in some short time interval $t\in [t_1, t_2)$ for $t_2$ chosen later.

In fact, let $r(x,t)$ be the distance function of $\S_t$ from $p_{t_1}$. Set $\rho(x,t):=\cosh r(x,t)$. Let $u:=\<\bar{\nabla}\rho,\nu\>$ be the ``support function". Define $$\varphi:=e^{(n-1)c_0(t-t_1)}\rho(x,t),$$ where $c_0$ is the constant in \eqref{unip}. Using the fact that $\rho_{;ij}=\rho g_{ij}-uh_{ij}$ and $F=F^{ij}h_{ij}$,  one can easily check that 
\begin{eqnarray*}
&&\frac{d}{dt}\varphi-F^{ij}\varphi_{;ij}=\varphi\left((n-1)c_0-F^{ij}g_{ij}\right)+c(t)e^{(n-1)c_0(t-t_1)}u\geq 0.\end{eqnarray*}
By parabolic maximum principle, $$\inf_{x\in \S_t} \rho(x,t)\geq e^{-(n-1)c_0(t-t_1)}\inf_{x\in \S_{t_1}} \rho(x,t_1)\geq e^{-(n-1)c_0(t-t_1)}\cosh r_{t_1}.$$
Therefore, in the time interval $[t_1, t_2)$, where $t_2=\min\{t_1+\frac{1}{(n-1)c_0}\ln\frac{ \cosh r_{t_1}}{\cosh \frac12 r_{t_1}}, T^*\}$, we have $r(x,t)\geq \frac12r_{t_1}$, namely, $B_{\frac12r_{t_1}}(p_{t_1})\subset K_{t}$.

Moreover, in view of a crucial property of h-convexity, which says $\<\p_r,\nu\>\geq \tanh r$ (see e.g. \cite{CM}, Theorem 4),  we infer that the ``support function" $u=\sinh r\<\p_r,\nu\>$
  is bounded below by a positive constant $u_0:=\sinh \frac12 r_{t_1}\tanh \frac12 r_{t_1}$ in the time interval $[t_1, t_2)$.  On the other hand, h-convexity ensures that $r(x,t)\leq r(t)+\ln 2\leq R_0+\ln2$ (see e.g. \cite{CM}, Theorem 4), which implies that $u$ is also bounded above.

\

\noindent{\bf 2.3}. 
In the time interval $[t_1, t_2)$, we consider an auxiliary function $$\Phi:=\frac{F}{u-\frac12{u_0}}.$$ One can verify the evolution equation of $\Phi$:

\begin{eqnarray}\label{eqf}
\frac{d}{dt}\Phi&=&F^{ij}\Phi_{;ij}+\frac{2F^{ij}u_{;i}\Phi_{;j}}{u-\frac12{u_0}}-\frac{c(t)}{u-\frac12{u_0}}(F^{ij}h_i^kh_{kj}-F^{ij}g_{ij}
)\nonumber\\&&-\frac{\frac12{u_0}}{(u-\frac12{u_0})^2}F^{ij}h_i^kh_{kj}F+\frac{2F-c(t)}{u-\frac12u_0}\cosh r\Phi-\frac{F}{u-\frac12u_0}F^{ij}g_{ij}.
\end{eqnarray}

By the h-convexity of $\S_t$, we know $F^{ij}h_i^kh_{kj}-F^{ij}g_{ij}\geq 0$. Also, by pinching estimate, we have
$$F^{ij}h_i^kh_{kj}\geq \varepsilon F^2.$$
Hence at the maximum point of $\Phi$ in $M\times [t_1,t_2)$, we deduce from \eqref{eqf} that
\begin{eqnarray}\label{eqf1}
0\leq \frac{d}{dt}\Phi\leq 
-\frac12{u_0}(u-\frac12{u_0})\varepsilon \Phi^3+2\cosh r\Phi^2.
\end{eqnarray}
Since $u-\frac12u_0\geq \frac12u_0$ and $r\leq R_0+\ln 2$, it follows from \eqref{eqf1} that  for  $t\in [t_1,t_2)$, $\Phi$ is bounded above by a constant $C$ depending only on $\S_{t_1}$. Consequently, as $u$ has also upper bound, we get that the speed $F$ is bounded above by $C$ for $t\in [t_1,t_2)$. Since $t_1$ can be chosen arbitrary in $[t_0,T^*)$, we conclude that $F$ has a uniform bound for  $t\in [t_0,T^*)$.

\

\noindent\textbf{3.} The flow exists for $t\in [0,\infty)$ and the flow convergence to a geodesic sphere.

In view of Step II.2,  the $C^2$ estimate for the graph function $r(x,t)$ is available. On the other hand, one can obtain a positive lower bound for $F$ by the  parabolic Harnack inequality (Lemma 6.2 \cite{Ma}). This, combining with the pinching estimate, yields the positive lower bounds for the principal curvatures. Hence, we conclude that the principal curvatures lie in a compact set of $\Gamma^+$. Taking into account that the flow is uniformly parabolic (Step II.1), we can derive the higher order estimates exactly as in \cite{Mc}, Section 8. Finally, we prove the long time existence in a standard way. 
The flow convergence to a geodesic sphere is proved by showing that the pinching of the principal curvatures is improving at an exponential rate (Proposition 7.1 and Corollary 7.2 in \cite{Ma}).

\end{proof}

\begin{remark}
 With Theorem \ref{rank}, one can show that the results in \cite{Ma} hold for h-convex hypersurfaces.
\end{remark}

\

A direct consequence of Theorem \ref{thmflow} is the following Alexandrov type theorem for hypersurfaces in $\H^n$.

\begin{corollary}\label{rig}
Let $0\leq l<k\leq n-1$.  Let $K\in \KK$ be an   h-convex  bounded domain in $\HH^n$ with smooth boundary satisfying that  $H_k=cH_l$ for some constant $c\in\RR$. Then $K$ must be a geodesic ball.
\end{corollary}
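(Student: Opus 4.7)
The plan is to invoke Theorem \ref{thmflow} with a speed function specifically adapted to the hypothesis, and then argue that the assumption forces the initial data to be a stationary solution of the associated quermassintegral-preserving flow.

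First I would set
$$F(\mathcal{W}) := \left(\frac{H_k}{H_l}\right)^{\frac{1}{k-l}}.$$
Since $\partial K$ is h-convex, its principal curvatures lie in the positive cone $\Gamma^+$, so this $F$ is well-defined, smooth, strictly increasing in each principal curvature, positively $1$-homogeneous, and normalized so that $F(1,\ldots,1)=1$. It is classical that the quotient $(\sigma_k/\sigma_l)^{1/(k-l)}$ is both concave and inverse-concave on $\Gamma^+$ (and the case $l=0$, $F=H_k^{1/k}$, is likewise standard). Hence $F$ satisfies all the structural conditions (P) required in Theorem \ref{thmflow}.

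Next, with $c_l(t)$ defined as in \eqref{ct}, consider the flow \eqref{flow} starting from $K_0=K$. The hypothesis $H_k = c H_l$ implies that $F \equiv c^{1/(k-l)}$ is constant on $\partial K$, so at $t=0$
\begin{equation*}
c_l(0) = \frac{\int_{\partial K} H_l F\, d\mu}{\int_{\partial K} H_l\, d\mu} = c^{1/(k-l)} = F.
\end{equation*}
Therefore the velocity $(c_l(t)-F)\nu$ vanishes identically on $\partial K$, so $K_t \equiv K$ is a smooth stationary solution of \eqref{flow} with initial datum $K$. Uniqueness for the strictly parabolic problem (guaranteed by condition (P)) shows that this is the unique solution of the flow issuing from $K$.

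Finally, Theorem \ref{thmflow} asserts that the solution converges exponentially to a geodesic sphere. Since the unique solution is the constant family $K_t \equiv K$, the limit coincides with $K$ itself, so $K$ must already be a geodesic ball. The only point demanding real care is the verification that the quotient $F=(H_k/H_l)^{1/(k-l)}$ satisfies the full list of properties (P) — in particular concavity and inverse concavity — on the relevant cone; once this is in hand, the rigidity follows almost formally from the convergence theorem.
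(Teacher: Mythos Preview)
Your proposal is correct and follows essentially the same approach as the paper: choose $F=(H_k/H_l)^{1/(k-l)}$, observe that the hypothesis $H_k=cH_l$ makes $F$ constant so that the flow \eqref{flow} is stationary, and then invoke the convergence statement of Theorem~\ref{thmflow}. The paper's proof is just a terse three-sentence version of exactly this argument; your additional remarks on uniqueness and on the verification of properties~(P) for $F$ simply fill in details the paper leaves implicit.
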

\begin{proof}
We just let $K$ evolve by \eqref{flow} with $F=\left(\frac{H_k}{H_l}\right)^\frac{1}{k-l}$. Then the flow is actually stationary. The convergence of Theorem \ref{thmflow} implies  that  $K$ must be a geodesic ball.
\end{proof}

Here we  provide another direct proof for the rigidity, which is applicable for a wide class of domains.

\begin{theorem}\label{rigidity}
Let $0\leq l<k\leq n-1$.  Let $K$ be a bounded domain in $\HH^n$ with smooth $k$-convex boundary $\Sigma$, namely, the principal curvatures of $\Sigma$ lie in $\overline{\Gamma_k^+}$.  If $H_k=cH_l$ for some constant $c\in\RR$, then $K$ must be a geodesic ball.
\end{theorem}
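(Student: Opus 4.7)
The plan is to use the classical Hsiung-Minkowski integral identities on $\H^n$ together with the Newton-MacLaurin inequality \eqref{N1}. Fix an origin $o\in\H^n$ and write the metric as $\bar g = dr^2 + \sinh^2 r\, g_{\mathbb{S}^{n-1}}$. Set $V := \cosh r$; then $\bar\nabla^2 V = V \bar g$, so that $X := \bar\nabla V = \sinh r\,\partial_r$ is a conformal Killing field with $\Delta V = nV$. With $u := \langle X,\nu\rangle$ on $\Sigma := \partial K$, a standard computation using the $(j{-}1)$-th Newton transformation of the shape operator yields the hyperbolic Hsiung-Minkowski identities
\[
\int_\Sigma \bigl(H_{j-1} V - H_j u\bigr)\, d\mu = 0, \qquad 1 \le j \le n-1.
\]

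For the main case $l \ge 1$, I would multiply $H_k - c H_l \equiv 0$ by $u$, integrate over $\Sigma$, and apply the above identities at indices $k$ and $l$ to obtain
\[
0 \;=\; \int_\Sigma u\,(H_k - c H_l)\, d\mu \;=\; \int_\Sigma V\,(H_{k-1} - c H_{l-1})\, d\mu.
\]
The Newton-MacLaurin inequality \eqref{N1} gives $H_{k-1} H_l \ge H_k H_{l-1} = c H_l H_{l-1}$ pointwise. Since $\Sigma$ is closed, it has an elliptic point (attained for instance at the maximum of the distance from $o$), so $c>0$ and one argues via connectedness that $H_l>0$ on all of $\Sigma$; then $H_{k-1}\ge c H_{l-1}$ pointwise. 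Positivity of $V$ and the vanishing of the integral force the integrand to vanish, i.e.\ $H_{k-1}H_l \equiv H_k H_{l-1}$, and the equality case of \eqref{N1} says that all principal curvatures of $\Sigma$ coincide at every point. Thus $\Sigma$ is totally umbilic, hence a geodesic sphere in $\H^n$, and $K$ is a geodesic ball.

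For the boundary case $l = 0$, one has $H_k \equiv c > 0$, and there is no lower-index Minkowski identity with which to pair. I would supplement the pairing argument with Brendle's Heintze-Karcher inequality in $\H^n$,
\[
\int_\Sigma \frac{V}{H_1}\, d\mu \;\ge\; n \int_K V\, dv, \qquad \text{with equality iff $\Sigma$ is umbilic}.
\]
Combining this with the Minkowski identity at $j = k$ (which via $\Delta V = nV$ and the divergence theorem reads $\int_\Sigma V H_{k-1}\, d\mu = c \int_\Sigma u\, d\mu = nc \int_K V\, dv$) and with the pointwise Newton-MacLaurin bounds $H_1 \ge c^{1/k}$ and $H_{k-1} \ge c^{(k-1)/k}$ from \eqref{N2}, one obtains a chain of inequalities whose endpoints coincide. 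Every step must then be an equality, and Heintze-Karcher rigidity (or equivalently the equality case of \eqref{N2}) forces $\Sigma$ to be umbilic.

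The main obstacle is twofold. First, the $k$-convexity hypothesis (closure of the Garding cone) a priori allows $H_l = 0$ at some points of $\Sigma$, whereas the $l \ge 1$ argument requires $H_l > 0$ in order to divide; establishing $H_l > 0$ uniformly relies on the elliptic point together with $H_k = c H_l$ (with $c > 0$) and a connectedness / strong maximum principle consideration. Second, the $l = 0$ case genuinely requires an external input (Brendle's Heintze-Karcher inequality, or a Montiel-Ros-type Alexandrov theorem for constant $H_k$), since the purely algebraic Minkowski-plus-Newton-MacLaurin pairing that closes the $l \ge 1$ case has no partner index to exploit.
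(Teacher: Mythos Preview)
Your proposal is correct and follows essentially the same route as the paper: for $l\geq 1$ the paper also pairs two Minkowski identities to obtain $\int_\Sigma(H_{k-1}-cH_{l-1})\rho\,d\mu=0$ and concludes via \eqref{N1}, and for $l=0$ it likewise invokes Brendle's Heintze--Karcher inequality together with the Minkowski formula and Newton--MacLaurin (using \eqref{N1} rather than your \eqref{N2}, but to the same effect). The only minor difference concerns the obstacle you flagged: the paper treats possible zeros of $H_l$ by appealing to the equality case of \eqref{N2} (forcing all principal curvatures to vanish at such points), whereas your connectedness argument rules such points out from the start.
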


\begin{proof} This can be proved by using the maximum principle, as mentioned in the paper of Korevaar \cite{Ko}. 
For convenience of the reader, we give a simple proof in the spirit of the work of Montiel-Ros \cite{MoRos} and \cite{Ros}.

Let $\H^n=\RR\times\SS^{n-1}$ with the hyperbolic metric 
\begin{eqnarray*}
\bar{g}=dr^2+\sinh^2r g_{\SS^{n-1}}
\end{eqnarray*}
as above.
Recall the function $$\rho:\H^n\to \RR: \quad \rho(x)=\cosh r(x).$$ 
The Minkowski formula in $\HH^n$ (see \cite{MoRos}, Lemma 2) tells that 
\begin{eqnarray}\label{Mink}
\int_{\Sigma} H_k\<\bar{\nabla}\rho,\nu\>d\mu=\int_{\Sigma} H_{k-1}\rho d\mu,\quad k=1,\cdots,n-1.
\end{eqnarray}

We consider two cases separately. 

For the case $l=0$, we have $H_k=c$. As explained in \cite{Ros}, this implies that $c$ is a positive constant and $H_j$ is positive for $1\leq j\leq k$. A Heintze-Karcher type inequality, recently proved by Brendle \cite{Brendle}, says that 
\begin{eqnarray}\label{HK}
\int_{\Sigma} \<\bar{\nabla}\rho,\nu\>d\mu
\leq  \int_{\Sigma} \frac{1}{H_1}\rho d\mu,\end{eqnarray}
and equality holds if and only if $\Sigma$ is a geodesic sphere.

By  \eqref{Mink}, \eqref{HK} and  the Newton-Maclaurin inequality \eqref{N1}, we have
\begin{eqnarray}\label{int}
\int_{\Sigma} H_{k-1}\rho d\mu&=& c\int_{\Sigma} \<\bar{\nabla}\rho,\nu\>d\mu
\leq  c\int_{\Sigma} \frac{1}{H_1}\rho d\mu\nonumber\\ &\leq & c\int_{\Sigma} \frac{H_{k-1}}{H_{k}}\rho d\mu
=\int_{\Sigma} H_{k-1}\rho d\mu.
\end{eqnarray}
Now equality holds in \eqref{HK} and \eqref{N1}, whence $\Sigma$ is a geodesic sphere.

For $l\geq 1$, we use \eqref{Mink} twice and the assumption $H_k=cH_l$ to get 
\begin{eqnarray*}
\int_{\Sigma} H_{k-1}\rho d\mu=\int_{\Sigma} H_k\<\bar{\nabla}\rho,\nu\>d\mu=c\int_{\Sigma} H_l\<\bar{\nabla}\rho,\nu\>d\mu=c\int_{\Sigma} H_{l-1}\rho d\mu.
\end{eqnarray*}
Therefore,
\begin{eqnarray}\label{rig2}
\int_{\Sigma} (H_{k-1}- cH_{l-1})\rho d\mu=0.
\end{eqnarray}
On the other hand, by the Newton-Maclaurin's inequality \eqref{N1}, we have
\begin{eqnarray}\label{rig1}
H_{k-1}- cH_{l-1}=H_{k-1}- \frac{H_k}{H_l}H_{l-1}\geq 0.
\end{eqnarray}
at the points where $H_l\neq 0$. For the points where $H_k=H_l=0$, we have
by the Newton-Maclaurin's inequality \eqref{N2} that 
$$0=H_l\geq H_k^{\frac{l}{k}}=0,$$
which implies all the principal curvatures at those points are zero. Hence \eqref{rig1} still holds at the points where $H_k=H_l=0$.
Thus, one immediately see from \eqref{rig1} and \eqref{rig2} that equality holds in \eqref{rig1}, which implies that $\Sigma$ is a geodesic sphere.
\end{proof}

\

\section{Proof of  theorems}

Before proving the theorems,  we define some auxiliary functions which will be used below.

First recall that, for $0\leq k\leq n-1$,
\begin{eqnarray*}
f_k:[0,\infty)\to \RR_+, \quad f_k(r)=W_k(B_r).\end{eqnarray*}
  It is easy to see that $f$ is smooth and it follows from \eqref{inclus} that $f_k$ is strictly monotone increasing.
Hence its inverse function $f_k^{-1}$ exists and is also strictly monotone increasing.

For $2\leq k\leq n-1$, define
\begin{eqnarray*}
g_{k}: [0,\infty)\to\RR_+, &&\quad g_k(s)=nf_{k}\circ f_{k-2}^{-1}(s)+\frac{n(k-1)}{n-k+2}s.\end{eqnarray*}
Thanks to the monotonicity of $f_k$, $g_k$ is also strictly monotone increasing and 
 its inverse function $g_k^{-1}$ exists and  is strictly monotone increasing. One can easily check from the definition of $f_k$ and $g_k$ that \begin{eqnarray}\label{pos}
\frac1ns-\frac{k-1}{n-k+2}g_{k}^{-1}(s)\geq 0.
\end{eqnarray}

For $1\leq k\leq n-1$, define
\begin{eqnarray*}
h_{k}:[0,\infty)\to\mathbb{R}_+,&&\quad h_{k}(s)=g_{k+1}\left(\frac1ns-\frac{k-2}{n-k+3}g_{k-1}^{-1}(s)\right).
\end{eqnarray*}
We claim that $h_k$ is also strictly monotone increasing. Indeed, it is direct to compute that
\begin{eqnarray*}
&&h_{k}'(s)= g_{k+1}'\left(\frac1ns-\frac{k-2}{n-k+3}g_{k-1}^{-1}(s)\right)\cdot\left(\frac1n-\frac{k-2}{n-k+3}\frac{1}{g_{k-1}'(g_{k-1}^{-1}(s))}\right)
\end{eqnarray*}
Since $g_{k+1}'>0$ and
$$g_{k-1}'=n(f_{k-1}\circ f_{k-3})'+n\frac{k-2}{n-k+3}>n\frac{k-2}{n-k+3},$$
we have that $h_{k}'>0$, namely  $h_{k}$ is strictly monotone increasing.


\

Now we start to prove main theorems.
We first prove Theorem \ref{thm1} by using special forms of  flow \eqref{flow}.

\

\noindent{\it Proof of Theorem \ref{thm1}.}
Let $K=K_0\in \KK$.

\

To prove Theorem \ref{thm1}, we consider  flow \eqref{flow} starting from $\S_0=\p K_0$ with $$F=\left(\frac{H_k}{H_l}\right)^{\frac{1}{k-l}},\quad c(t)=c_l(t)=\frac{\int_{\S_t} H_k^{\frac{1}{k-l}}H_l^{1-\frac{1}{k-l}}d\mu_t}{\int_{\S_t} H_l d\mu_t}.$$
Let $\S_t, t\in [0,\infty)$ be the solution in obtained Theorem \ref{thmflow}, which encloses $K_t$. One verifies from \eqref{var4} that
\begin{equation}\begin{array}{rcl}\label{varr}
\ds\vs \frac{d}{dt} W_k(K_t)&=&\ds \int_{\S_t} H_k \left(c(t)-F\right)\\
&=& \ds \frac{n-k}{n}\frac{1}{\int_{\S_t} H_l }\left(\int_{\S_t} H_k \int_{\S_t} H_k^{\frac{1}{k-l}}H_l^{1-\frac{1}{k-l}}-\int_{\S_t} H_l \int_{\S_t} H_k^{1+\frac{1}{k-l}}H_l^{-\frac{1}{k-l}}\right)
\end{array}
\end{equation}
It follows from the H\"older inequality that
\begin{eqnarray}\label{holder1}
\int_{\S_t} H_k  \leq \left( \int_{\S_t} H_k^{1+\frac{1}{k-l}}H_l^{-\frac{1}{k-l}}\right)^\frac{k-l}{k-l+1}\left(\int_{\S_t} H_l \right)^{\frac{1}{k-l+1}},
\end{eqnarray}
\begin{eqnarray}\label{holder2}
\int_{\S_t} H_k^{\frac{1}{k-l}}H_l^{1-\frac{1}{k-l}}\leq \left( \int_{\S_t} H_k^{1+\frac{1}{k-l}}H_l^{-\frac{1}{k-l}}\right)^\frac{1}{k-l+1}\left(\int_{\S_t} H_l \right)^{\frac{k-l}{k-l+1}}.
\end{eqnarray}
Inserting \eqref{holder1} and \eqref{holder2} into \eqref{varr}, we have 
\begin{eqnarray}\label{dec}
\frac{d}{dt} W_k(K_t)\leq 0.
\end{eqnarray}
Note that the flow preserves $W_l$. Theorem \ref{thmflow} says that the flow converges to some geodesic ball $B_r$ with $W_l(B_r)=W_l(K_0)=W_l(K_t)$.
Thus we have 
\begin{eqnarray}\label{decr}
W_k(K)\ge W_k(B_r),\quad \hbox{ with } W_l(K)=W_l(B_r)\hbox{ for some }r>0,
\end{eqnarray}
which is equivalent to
\begin{eqnarray}\label{ineq1}
W_k(K)\geq f_k\circ f_l^{-1}(W_l(K)).
\end{eqnarray}
Equality in \eqref{ineq1} holds iff equalities in \eqref{holder1} and \eqref{holder2} hold, iff $H_k=cH_l$ for some $c\in \RR$, which means by Corollary \ref{rig} or
 Theorem \ref{rigidity} that $K$ is a geodesic ball in $\HH^n$.  
\qed 

\

\noindent{\it Proof of Theorem \ref{thm2}.}
Once we have Theorem \ref{thm1} and especailly have \eqref{ineq1}, it is easy to see from \eqref{relation} that\begin{eqnarray}\label{kl}
V_{n-1-k}(K)&=& n\left( W_{k+1}(K)+\frac{k}{n-k+1}W_{k-1}(K)\right)\nonumber\\
&\geq & \left(nf_{k+1}\circ f_{k-1}^{-1}+\frac{nk}{n-k+1}Id\right)\left(W_{k-1}(K)\right)\nonumber\\
&\geq &\left(nf_{k+1}\circ f_{l}^{-1}+\frac{nk}{n-k+1}f_{k-1}\circ f_l^{-1}\right)\left(W_{l}(K)\right),
\end{eqnarray}
where $Id:\RR\to\RR$ is the identity function. This leads to  Statement (i) in Theorem \ref{thm2}.

\

Statements (ii) and (iii) in Theorem \ref{thm2} are almost included in Statement (i) except that
(a) the area $V_{n-1}$ attains its minimum at a geodesic ball among the domains with given volume $W_0$, and (b) $\int_{\p K} H_1d\mu$ attains its minimum at a geodesic ball among the domains with given area of the boundary $|\p K|$. However, 
(a) is just the classical isoperimetric inequality in $\H^n$ and (b) was proved in \cite{GWW2} by using  results of Cheng-Zhou \cite {CZ} and Li-Wei-Xiong \cite{LWX},
which was mentioned in the introduction.


\

We now prove Statement (iv) of Theorem \ref{thm2}. First we consider the simple case $k-l=2$. For $l=0,k=2$, the statement is included in Statement (iii). Hence we assume $k\geq 3$.

First of all, we see from \eqref{relation} and \eqref{ineq1} that
\begin{eqnarray}\label{2i-2}
\int_{\p K} H_{k-2} d\mu &=&  n W_{k-1}(K)+\frac{n(k-2)}{n-k+3}W_{k-3}(K)\nonumber\\
&\geq & \left(nf_{k-1}\circ f_{k-3}^{-1}+\frac{n{k-2}}{n-k+3}Id\right) (W_{k-3}(K))\nonumber\\&=&g_{k-1} (W_{k-3}(K)).
\end{eqnarray}
It follows from \eqref{2i-2} that
\begin{eqnarray}\label{2i-3}
W_{k-3}(K)\leq g_{k-1}^{-1}\left( \int_{\p K} H_{k-2} d\mu \right).
\end{eqnarray}


Next, we use \eqref{relation} and \eqref{ineq1}  again  on $\int_{\p K} H_{k} d\mu$ to obtain that\begin{eqnarray}\label{2i}
\int_{\p K} H_{k} d\mu&\geq & g_{k+1}(W_{k-1}(K))\nonumber\\
&=&g_{k+1}\left(\frac1n\int_{\p K} H_{k-2} d\mu-\frac{k-2}{n-k+3}W_{k-3}(K)\right).
\end{eqnarray}
In view of \eqref{pos}, we deduce from \eqref{2i-3} that
\begin{eqnarray*}
&&\frac1n\int_{\p K} H_{k-2} d\mu-\frac{k-2}{n-k+3}W_{k-3}(K)\nonumber\\&\geq &\frac1n\int_{\p K} H_{k-2} d\mu-\frac{k-2}{n-k+3}g_{k-1}^{-1}\left( \int_{\p K} H_{k-2} d\mu\right)\geq 0.
\end{eqnarray*}
Back to \eqref{2i}, using the monotonicity of $g_{k+1}$, we obtain that
\begin{eqnarray}\label{last}
\int_{\p K} H_{k} d\mu &\geq& g_{k+1}\left[\frac1n\int_{\p K} H_{k-2} d\mu-\frac{k-2}{n-k+3}g_{k-1}^{-1}\left( \int_{\p K} H_{k-2} d\mu\right)\right]\nonumber\\&=&h_k\left(\int_{\p K} H_{k-2} d\mu\right).
\end{eqnarray}

For $k-l=2m$ for $m\in \NN$, due to the monotonicity of $h_k$, we can inductively utilize \eqref{last} to deduce that \begin{eqnarray}\label{last2}
\int_{\p K} H_{k} d\mu\geq h_{k}\circ h_{k-2}\circ\cdots\circ h_{l+2} \left(\int_{\p K} H_{l} d\mu\right).
\end{eqnarray}

Notice that the inequalities we have used previously are all optimal in the sense that equalities hold iff $K$ is a geodesic ball. Hence we conclude Statement  (iv) in Theorem \ref{thm2}. 

We complete the proof of Theorem \ref{thm2}. \qed

\

\

\noindent{\it Proof of Theorem \ref{thm3}}: it is sufficient to explicitly write out formula \eqref{kl} for $l=1$ and $1\leq k\leq n-1$. A direct calculation yields that 
\begin{eqnarray*}
&&f_1(r)=\frac1n|\p B_r|=\frac1n\o_{n-1}\sinh^{n-1}(r).
\end{eqnarray*}
Thus \begin{eqnarray*}
&&f_1^{-1}(s)=\sinh^{-1}\left[\left(\frac{ns}{\o_{n-1}}\right)^{\frac{1}{n-1}}\right].
\end{eqnarray*}
Since $H_k(B_r)=\coth^k(r)$, it follows from \eqref{even} and \eqref{odd} that if $k$ is odd,
\begin{eqnarray*}
&&f_k(r)=\frac1n \sum_{i=0}^{\frac{k-1}{2}} (-1)^i\frac{(k-1)!!(n-k)!!}{(k-1-2i)!!(n-k+2i)!!}\o_{n-1}\coth^{k-1-2i}(r)\sinh^{n-1}(r),
\end{eqnarray*}
while if $k$ is even,
\begin{eqnarray*}
f_k(r)&=&\frac1n \sum_{i=0}^{\frac{k}{2}-1} (-1)^i\frac{(k-1)!!(n-k)!!}{(k-1-2i)!!(n-k+2i)!!}\o_{n-1}\coth^{k-1-2i}(r)\sinh^{n-1}(r)\\&&+ (-1)^{\frac{k}{2}}\frac{(k-1)!!(n-k)!!}{n!!}\int_0^r \o_{n-1}\sinh^{n-1}(t)dt.
\end{eqnarray*}
Hence, for $k$ odd, \begin{eqnarray*}
&&f_k\circ f_1^{-1}(s)=\frac1n \sum_{i=0}^{\frac{k-1}{2}} (-1)^i\frac{(k-1)!!(n-k)!!}{(k-1-2i)!!(n-k+2i)!!}
\o_{n-1}\frac{ns}{\o_{n-1}}\left[1+\left(\frac{ns}{\o_{n-1}}\right)^{\frac{-2}{n-1}}\right]^{\frac{k-1}{2}-i},\end{eqnarray*}
and for $k$ even,
\begin{eqnarray*}
f_k\circ f_1^{-1}(s)&=&\frac1n \sum_{i=0}^{\frac{k}{2}-1} (-1)^i\frac{(k-1)!!(n-k)!!}{(k-1-2i)!!(n-k+2i)!!}\o_{n-1}\frac{ns}{\o_{n-1}}\left[1+\left(\frac{ns}{\o_{n-1}}\right)^{\frac{-2}{n-1}}\right]^\frac{k-1-2i}{2}\nonumber\\&&+ (-1)^{\frac{k}{2}}\frac{(k-1)!!(n-k)!!}{n!!}\int_0^{\sinh^{-1}\left[\left(\frac{ns}{\o_{n-1}}\right)^{\frac{1}{n-1}}\right]} \o_{n-1}\sinh^{n-1}(t)dt.
\end{eqnarray*}
From the previous two formulas  we can easily compute that for $k\geq 2$,
\begin{eqnarray}\label{last3}
nf_{k+1}\circ f_1^{-1}(s)+\frac{nk}{n-k+1}f_{k-1}\circ f_1^{-1}(s)\nonumber &=&\o_{n-1}\frac{ns}{\o_{n-1}}\left[1+\left(\frac{ns}{\o_{n-1}}\right)^{\frac{-2}{n-1}}\right]^{\frac{k}{2}}\nonumber\\&=& \omega_{n-1}\left[\left(\frac{ns}{\omega_{n-1}}\right)^{\frac{2}{k}}+\left(\frac{ns}{\omega_{n-1}}\right)^{\frac{2}{k}\frac{(n-k-1)}{n-1}}\right]^{\frac {k}{2}}.
\end{eqnarray}
Letting $s=W_1(K)=\frac1n|\p K|$ in \eqref{last3},  we obtain from \eqref{kl}  inequality \eqref{eq_add}.
\qed

\


From the proof, one can see again the difference between the even case and the odd case. However, an interesting cancellation gives
the uniform inequality \eqref{eq_add}.

\

\

\noindent{\it Acknowledgment}.
We learned from
Pengfei Guan that he and Junfang Li
obtained the same type results with the same functionals
and a different flow, a modified inverse curvature flow,
under  weaker conditions that $K$
is a $k$-convex star-shaped domain and  satisfies a
technical condition that  $|\bar{\nabla} \log \cosh r| \le C(n),$ 
where $C(n)\ge 4$ is a dimensional constant. This technical
condition is believed removable. We would like to thank Yuxin Ge,  Pengfei Guan and Jie Wu for helpful discussions.

\end{document}